\newtheorem{theorem}{Theorem}[section]
\newtheorem{proposition}[theorem]{Proposition}
\newtheorem{conjecture}[theorem]{Conjecture}
\newtheorem{prop}[theorem]{Proposition}
\theoremstyle{definition}
\newtheorem{definition}[theorem]{Definition}
\newtheorem{example}[theorem]{Example}
\newtheorem{question}[theorem]{Question}
\theoremstyle{remark}
\newtheorem{remark}[theorem]{Remark}
\numberwithin{equation}{section}
\def\CC{{\mathbb C}}
\def\QQ{{\mathbb Q}}
\def\Gr{{\mathbb{G}}}
\def\LG{{\mathbb{LG}}}
\def\Hilb{{\operatorname{Hilb}}}
\def\boldx{{\mathbf{x}}}
\newcommand\qbinom[2]{ \left[ \begin{matrix} #1 \\ #2 \end{matrix} \right]_q }
\newcommand\smallqbinom[2]{ \left[ \begin{smallmatrix} #1 \\ #2 \end{smallmatrix} \right]_q }
\newcommand\qbinomprime[3]{ \left[ \begin{matrix} #1 \\ #2 \end{matrix} \right]^\prime_{#3,q} }
\newcommand\qbinomprimeprime[2]{ \left[ \begin{matrix} #1 \\ #2 \end{matrix} \right]^{\prime\prime}_{q} }
\theoremstyle{plain}
\newcommand{\thistheoremname}{}
\newtheorem{genericthm}[theorem]{\thistheoremname}
\subjclass[2010]{05E14, 05E05, 14N15}
\keywords{Grassmannian, Lagrangian, Hilbert series, q-binomial, k-conjugation, k-Schur function}
\begin{document}
\title[Filtering cohomology of ordinary and Lagrangian Grassmannians]{Filtering cohomology of ordinary and Lagrangian Grassmannians}
\author{The 2020 Polymath Jr. ``q-binomials and the Grassmannian" group$^\dagger$}\thanks{$^\dagger$ Mentor and corresponding author: Victor Reiner, School of Mathematics, University of Minnesota, Minneapolis MN 55455. Assistant Mentor: Galen Dorpalen-Barry. Team Members: Huda Ahmed, Rasiel Chishti, Yu-Cheng Chiu, Jeremy Ellis, David Fang, Michael Feigen, Jonathan Feigert, Mabel González, Dylan Harker, Jiaye Wei, Bhavna Joshi, Gandhar Kulkarni, Kapil Lad, Zhen Liu, Ma Mingyang, Lance Myers, Arjun Nigam, Tudor Popescu, Zijian Rong, Eunice Sukarto, Leonardo Mendez Villamil, Chuanyi Wang, Napoleon Wang, Ajmain Yamin, Jeffery Yu, Matthew Yu, Yuanning Zhang, Ziye Zhu, Chen Zijian
}

\maketitle
\begin{abstract}
  This paper studies, for a positive integer $m$, the subalgebra of the cohomology ring of the complex Grassmannians generated by the elements of degree at most $m$.  We build in two ways upon a conjecture for the Hilbert series of this subalgebra due to Reiner and Tudose.  The first reinterprets it in terms of the operation of $k$-conjugation,
  suggesting two conjectural bases for the subalgebras that would imply their conjecture.  The second introduces an analogous conjecture for the cohomology of Lagrangian Grassmannians. 
\end{abstract}


\section{Introduction}
\label{intro-section}

This paper concerns the Grassmannian $\Gr(\ell,\mathbb{C}^{k+\ell})$
of $\ell$-dimensional subspaces in $\CC^{k+\ell}$, and its cohomology ring with rational coefficients,
denoted here
$
R^{\ell,k}:=H^*(\Gr(\ell,\mathbb{C}^{k+\ell}),\mathbb{Q}).
$
It has the following well-known presentation following from work of A. Borel (see Glover and Homer \cite[\S 2]{GloverHomer}):
\begin{equation}
    \label{symmetric-Borel-presentation-for-Grassmannian}
    R^{\ell,k} \cong \QQ[e_1,e_2,\ldots,e_\ell,h_1,h_2,\ldots,h_k] \,\,\, / \,\,\,
    \Big( \sum_{i=0}^d (-1)^i e_i h_{d-i} \Big)_{d=1,2,\ldots, k+\ell}
\end{equation}
with $e_0=h_0=1$ and $e_i=h_j=0$ for $i \not\in\{0,1,\ldots,\ell\}$
or $j \not\in \{0,1,\ldots,k\}$.
Here $e_i, h_i$ are (up to signs)
$i^{th}$ Chern classes of the tautological $\ell$-plane and quotient
$k$-plane bundles over $\Gr(\ell,\mathbb{C}^{k+\ell})$.
The cohomology is nonvanishing only in even degrees, and we therefore find it convenient to halve the cohomological grading in considering $R^{\ell,k}$ as a graded
ring. With this grading, $\deg(e_i)=\deg(h_i)=i$.

We will be concerned with various subalgebras of $R^{\ell,k}$ and their {\it Hilbert series}.  The Hilbert series is 
defined for graded $\QQ$-vector spaces $V=\oplus_d V_d$,
by $\Hilb(V,q):=\sum_d q^d \dim_\QQ V_d$.
One has the CW-decomposition
$
\Gr(\ell,\CC^{k+\ell})=\bigsqcup_{\lambda} X_\lambda
$
into {\it Schubert cells} $X^\lambda \cong \CC^{|\lambda|}$,
indexed by partitions $\lambda=(\lambda_1,\ldots,\lambda_\ell)$
with $k \geq \lambda_1 \geq \cdots \geq \lambda_\ell \geq 0$
whose {\it Ferrers diagram} fits inside a $\ell \times k$
rectangle, that is, $\lambda \subseteq (k^\ell)$;  here $|\lambda|:=\sum_i \lambda_i$.  This implies that the
Hilbert series for $R^{\ell,k}$
is a $q$-analogue of $\binom{k+\ell}{\ell}$ called a
{\it $q$-binomial coefficient}:
$$
\Hilb(R^{\ell,k},q)=
\sum_{ \lambda \subseteq (\ell^k)} q^{|\lambda|}=:
\qbinom{k+\ell}{\ell}=\frac{[k+\ell]!_q}{[\ell]!_q \,\, [k]!_q}
$$
where $[n]!_q:=[n]_q [n-1]_q \cdots [3]_q [2]_q [1]_q$
and $[n]_q:=1+q+q^2+\cdots+q^{n-1}$.  

\subsection{The Grassmannian conjecture}
We are interested here in the
Hilbert series for certain subalgebras of $R^{\ell,k}$.
\vskip.1in
\noindent
{\bf Definition}.
{\it
For $m=0,1,2,\ldots$, let
$R^{\ell,k,m}$ denote the $\QQ$-subalgebra of $R^{\ell,k}$
generated by the homogenous elements of degree at most $m$.
}
\vskip.1in
\noindent
It is easily seen from \eqref{symmetric-Borel-presentation-for-Grassmannian} that $R^{\ell,k}$ is generated
by either $e_1,\ldots,e_\ell$ or by $h_1,\ldots,h_k$,
so that 
\begin{itemize}
    \item 
$R^{\ell,k,m}$ is also the $\QQ$-subalgebra generated by
$e_1,\ldots,e_m$, or by $h_1,\ldots,h_m$, and
\item $R^{\ell,k,m}=R^{\ell,k}$ for $m \geq \min(\ell,k)$.
\end{itemize}
Furthermore, note that the presentation~\eqref{symmetric-Borel-presentation-for-Grassmannian}
shows that this isomorphism of polynomial rings
\begin{equation}
\label{omega-on-quotients}
\begin{array}{rcl}
\QQ[e_1,\ldots,e_\ell,h_1,\ldots,h_k]
& \overset{\omega}{\longrightarrow} &
\QQ[e_1,\ldots,e_k,h_1,\ldots,h_\ell]\\
e_i &\longmapsto &h_i,\text{ for } 1 \leq i \leq \ell\\
h_j & \longmapsto &e_j,\text{ for } 1 \leq j \leq k
\end{array}
\end{equation}
induces graded ring isomorphisms
(corresponding to the homeomorphism $\Gr(\ell,\CC^{k+\ell}) \cong
\Gr(k,\CC^{k+\ell}) $)
\begin{align}
    R^{\ell,k}&\cong R^{k,\ell},\\
    R^{\ell,k,m}&\cong R^{k,\ell,m}.
\end{align}
In work on the {\it fixed point property} for $\Gr(\ell,\CC^{k+\ell})$,
O'Neill \cite{ONeill}
conjectured the form of all
graded endomorphisms of $R^{\ell,k}$.
A special case of this was proved by 
M. Hoffman \cite{Hoffman} via complicated means.
Reiner and Tudose later made
a series of successively weaker conjectures
\cite[Conj 1,2,3,4]{ReinerTudose}, any of
which would simplify Hoffman's proof.
Their strongest conjecture described the
Hilbert series for
$R^{\ell,k,m}$, using another
$q$-analogue\footnote{At $q=1$, its definition becomes the sum $\sum_{j=0}^{\ell-i} \binom{i+j-1}{j}=\binom{\ell}{i}$, sometimes called the {\it hockey stick identity}.} of the binomial coefficient 
$\binom{\ell}{i}$ for $i \geq 1$, depending also on $k$:
\begin{equation}
\label{q-binomial-prime-definition}
\qbinomprime{\ell}{i}{k}:=
\sum_{j=0}^{\ell-i} 
q^{j(k-i+1)} \qbinom{i+j-1}{j}
\end{equation}

\vskip.1in
\noindent
{\bf Conjecture. }\cite[Conj. 1]{ReinerTudose}
{\it
For each $m=0,1,2,\ldots,\min(k,\ell)$, one has
\begin{equation}
\label{main-conj}
\Hilb(R^{\ell,k,m}, q) =
1 + \sum_{i=1}^m q^i\qbinom{k}{i}  \qbinomprime{\ell}{i}{k}.
\end{equation}
}

We will refer to \eqref{main-conj} as the R-T Conjecture.
It was checked in \cite{ReinerTudose} for the extreme cases 
$m=1$ and $m=\min(\ell,k)$, but left open in all other cases.  The case of $m=\min(\ell,k)$ relied on
the following combinatorial interpretation for the summands on the right, appearing as
\cite[Prop. 8]{ReinerTudose},
and reviewed and reproven in Section~\ref{vacancy-section} below.
Given a partition $\lambda$ with $\ell(\lambda)$ nonzero parts and $\lambda_1 \leq k$, then $\lambda$ lies inside
a $\ell(\lambda) \times k$ rectangle $(k^{\ell(\lambda)})$. 
Say that $\lambda$ is {\it $i$-vacant} if $i$ is the largest integer for which the complementary {\it skew diagram}
$(k^{\ell(\lambda)})/\lambda$ contains an $i \times (i-1)$ rectangle in its southeast corner.  Then
\begin{equation}
\label{R-T-vacancy-interpretation}
 q^i \qbinom{k}{i}  \qbinomprime{\ell}{i}{k}
 =\sum_{\substack{i-\text{vacant}\\\lambda \subseteq (k^\ell)}} q^{|\lambda|}
\end{equation}

Our first contribution is a useful reformulation
of this formula.
Section~\ref{k-conjugation-section} recalls the notion of {\it $k$-bounded 
partitions} and the notion of {\it $k$-conjugation} from the theory of {\it $k$-Schur functions} \cite{LapointeLascouxMorse, k-schur-affine-schubert}.
By applying the $k$-conjugation map $\lambda \longmapsto \lambda^{\omega(k)}$ to
the $i$-vacant partitions $\lambda \subseteq (k^\ell)$
that index the summation in \eqref{R-T-vacancy-interpretation}, we
will reformuate it as follows.

\begin{theorem}
\label{k-conjugation-reformulation}
For $i=1,2,\ldots,\min(k,\ell)$, one has
$$
q^i \qbinom{k}{i}  \qbinomprime{\ell}{i}{k}
 =\sum_{\substack{\lambda: \lambda_1=i,\\
\lambda^{\omega(k)} \subseteq (k^\ell)} }
q^{|\lambda|}.
$$
\end{theorem}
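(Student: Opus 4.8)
The plan is to read off the left-hand side from the vacancy interpretation \eqref{R-T-vacancy-interpretation} and then transport that sum across the $k$-conjugation involution. Since $k$-conjugation $\lambda \mapsto \lambda^{\omega(k)}$ is an involution on $k$-bounded partitions that preserves size (both facts recalled in Section~\ref{k-conjugation-section}), it is a weight-preserving involution, so
\[
\sum_{\substack{i\text{-vacant}\\ \lambda \subseteq (k^\ell)}} q^{|\lambda|}
= \sum_{\substack{i\text{-vacant}\\ \lambda \subseteq (k^\ell)}} q^{|\lambda^{\omega(k)}|}.
\]
Setting $\mu = \lambda^{\omega(k)}$ and using $\lambda = \mu^{\omega(k)}$, the constraint $\lambda \subseteq (k^\ell)$ becomes $\mu^{\omega(k)} \subseteq (k^\ell)$, and it remains only to check that, as $\lambda$ runs over the $i$-vacant partitions inside $(k^\ell)$, the image $\mu = \lambda^{\omega(k)}$ runs exactly over the partitions with $\mu_1 = i$ and $\mu^{\omega(k)} \subseteq (k^\ell)$. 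Because $k$-conjugation is a bijection, this reduces entirely to the following statement, which I will call the \emph{Key Lemma}: a $k$-bounded partition $\lambda$ is $i$-vacant if and only if $(\lambda^{\omega(k)})_1 = i$. Granting it, combining the displayed equality with \eqref{R-T-vacancy-interpretation} finishes the proof.

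To prove the Key Lemma I would first make the vacancy condition explicit. Writing $L = \ell(\lambda)$, the $i \times (i-1)$ rectangle sits in the southeast corner of $(k^L)/\lambda$ precisely when its topmost row, row $L-i+1$, already has its last $i-1$ columns empty, i.e.\ when $\lambda_{L-i+1} \le k - i + 1$; hence $\lambda$ is $i$-vacant iff $i$ is the largest index with $\lambda_{L-i+1} + (i-1) \le k$. On the other side, passing to the associated $(k+1)$-core $\kappa = \mathfrak{c}(\lambda)$ (the bijection underlying $k$-conjugation in Section~\ref{k-conjugation-section}), the first part of $\lambda^{\omega(k)} = \mathfrak{c}^{-1}(\kappa')$ is the number of cells in the first row of $\kappa'$ of hook length at most $k$, which by transposition equals the number of cells in the first column of $\kappa$ of hook length at most $k$. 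Using that $\mathfrak{c}$ preserves the number of parts, so that $\kappa$ has $L$ rows and $h_\kappa(j,1)=\kappa_j+L-j$, and noting that a cell $(j,1)$ has the largest hook in its row and so has hook length $\le k$ exactly when the whole of row $j$ lies in the hook-$\le k$ region (equivalently $\lambda_j = \kappa_j$), this count equals $\#\{\,j : \kappa_j + L - j \le k\,\}$, a bottom-justified segment $\{L-s+1, \dots, L\}$ of size $s = (\lambda^{\omega(k)})_1$.

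It then remains to identify the core quantity $s$ with the vacancy index $i_0 = \max\{i : \lambda_{L-i+1} \le k-i+1\}$. One direction is immediate: for each short bottom row one has $\lambda_j = \kappa_j$, so $\kappa_j + L - j \le k$ reads $\lambda_{L-i+1} \le k - i + 1$ with $i = L-j+1$, giving $i_0 \ge s$. The reverse inequality $i_0 \le s$ is the main obstacle: the row $j^\ast = L - s$ just above the short segment is only \emph{partially} short ($\lambda_{j^\ast} < \kappa_{j^\ast}$), so the clean identity $\lambda_{j^\ast} = \kappa_{j^\ast}$ is unavailable, and one must instead show that the number of short-hook cells in this partial row is already large enough to violate $\lambda_{j^\ast} \le k - s$. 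I expect to establish this by exploiting the defining property of $(k+1)$-cores—that no hook length is divisible by $k+1$—to bound $\kappa_{j^\ast} - \lambda_{j^\ast}$ in terms of the rows beneath it, controlling how far row $j^\ast$ is pushed to the right; the strict monotonicity of first-column hooks down the column then pins $s = i_0$.

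Once the Key Lemma is in hand, the bijection and weight-preservation assembled in the first paragraph yield the claimed identity. The only genuine work is the core-combinatorial analysis of partial rows in the reverse inequality; everything else is the formal transport of \eqref{R-T-vacancy-interpretation} across the $k$-conjugation involution.
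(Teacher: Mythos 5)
Your overall strategy coincides with the paper's: reduce Theorem~\ref{k-conjugation-reformulation} to \eqref{R-T-vacancy-interpretation} by transporting the sum across the size-preserving involution $\lambda\mapsto\lambda^{\omega(k)}$, with everything resting on your Key Lemma, which is verbatim the Claim inside the paper's Proposition~\ref{k-schur-interpretation}. Your translation of $(\lambda^{\omega(k)})_1$ into the number of first-column cells of the core $\kappa=p^{-1}(\lambda)$ of hook length at most $k$ is correct, and so is your proof of $i_0\ge s$. The problem is the reverse inequality $i_0\le s$ --- which you yourself call ``the main obstacle'' and ``the only genuine work'' --- is never proved: you only announce that you ``expect to establish'' it by bounding $\kappa_{j^*}-\lambda_{j^*}$ via core properties. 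As written, this is a genuine gap, and it sits at the exact center of the lemma; without it you have only the coefficientwise inequality in one direction, not the identity.

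For comparison, the paper closes this step in one stroke using the row-sliding description of $p^{-1}$ recalled in Section~\ref{k-conjugation-section}: writing $j=\ell(\lambda)-i$ for an $i$-vacant $\lambda$, the first-column cell of row $j+1$ has hook length at most $(k-i+1)+(i-1)=k$ in $\lambda$, so the bottom $i$ rows never slide, while the first-column cell of row $j$ has hook length at least $(k-i+1)+(i+1)-1=k+1$, so row $j$ (and every row above it) slides strictly right; hence exactly $i$ cells of the first column of $\kappa$ have hook length $\le k$, i.e.\ $(\lambda^{\omega(k)})_1=i$ on the nose, and the ``if and only if'' follows formally because each $\lambda$ is $i$-vacant for a unique $i$. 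Your own plan can in fact be completed, and more easily than you anticipate: with $j^*=L-s$, if $\kappa_{j^*}>\lambda_{j^*}$ then the cell $(j^*,\kappa_{j^*}-\lambda_{j^*})$ has arm $\lambda_{j^*}$, leg at most $s$, and hook length $>k$, hence $\ge k+2$ since a $(k+1)$-core has no hook equal to $k+1$ (note that equality, not divisibility by $k+1$, is the paper's definition; the divisibility statement is a theorem you would otherwise need to cite), which forces $\lambda_{j^*}\ge k-s+1$; if instead $\kappa_{j^*}=\lambda_{j^*}$, then $\lambda_{j^*}+s=\kappa_{j^*}+L-j^*>k$ directly. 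Either way $\lambda_{L-s}>k-s$, so $\lambda$ is not $i$-vacant for any $i>s$, giving $i_0=s$. Until some such argument is actually supplied, however, the proposal does not prove the theorem.
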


Section~\ref{schur-k-schur} builds upon this formula, and conjectures the existence of two
bases for $R^{\ell,k}$ that would prove the R-T Conjecture. Recall that the 
ring of symmetric functions $\Lambda:=\QQ[h_1,h_2,\ldots]$
in infinitely many variables has a basis $\{h_\lambda\}$
indexed by partitions $\lambda$, where 
$h_\lambda:=h_{\lambda_1} h_{\lambda_2} \cdots h_{\lambda_\ell}$.
\begin{conjecture}
\label{h-basis-conjecture}
For $1 \leq m \leq \min(\ell,k)$, 
$
\{
h_\lambda: \lambda_1 \leq m \text{ and }
\lambda^{\omega(k)} \subseteq (k^\ell)
\}
$
is a $\QQ$-basis of $R^{\ell,k,m}$.
\end{conjecture}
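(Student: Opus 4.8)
The plan is to prove the two halves of the basis claim separately: that the finite family
$B=\{\,h_\lambda:\lambda_1\le m,\ \lambda^{\omega(k)}\subseteq(k^\ell)\,\}$ spans $R^{\ell,k,m}$, and that it is linearly independent there. The Hilbert-series consequence, hence the R-T Conjecture, will then follow by grouping $B$ according to the largest part $\lambda_1=i$: the empty partition contributes the constant term $1$, and for each $i=1,\dots,m$ Theorem~\ref{k-conjugation-reformulation} rewrites $\sum_{\lambda_1=i,\ \lambda^{\omega(k)}\subseteq(k^\ell)}q^{|\lambda|}$ as $q^i\qbinom{k}{i}\qbinomprime{\ell}{i}{k}$, reassembling the right-hand side of \eqref{main-conj}. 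Since the presentation \eqref{symmetric-Borel-presentation-for-Grassmannian} shows that $R^{\ell,k,m}$ is generated by $h_1,\dots,h_m$, the larger family $\{h_\lambda:\lambda_1\le m\}$ already spans; so the real content of spanning is that the finitely many $\lambda$ with $\lambda^{\omega(k)}\subseteq(k^\ell)$ suffice, and the content of independence is that these carry no relations. Throughout I would work in the Schubert basis: it is classical that under $\Lambda\twoheadrightarrow R^{\ell,k}$ the Schur functions $s_\nu$ with $\nu\subseteq(k^\ell)$ map to a $\QQ$-basis while those with $\nu\not\subseteq(k^\ell)$ map to $0$, so that via the Kostka expansion $h_\mu=\sum_\nu K_{\nu\mu}s_\nu$ one has
\[
 h_\mu \;=\; \sum_{\substack{\nu\subseteq(k^\ell)\\ \nu\trianglerighteq\mu}} K_{\nu\mu}\,s_\nu
 \qquad\text{in } R^{\ell,k},
\]
the sum running over partitions $\nu$ dominating $\mu$ that fit in the $\ell\times k$ rectangle.

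For linear independence I would establish a triangular relationship with the Schubert basis. The goal is a \emph{leading-term map} $\lambda\mapsto\pi(\lambda)$ attaching to each good $\lambda$ a partition $\pi(\lambda)\subseteq(k^\ell)$ occurring with nonzero coefficient in $h_\lambda$ and strictly dominated by all other surviving terms, so that once $\pi$ is shown to be injective the usual peeling-off argument forces $\{h_\lambda\}_{\lambda\in B}$ to be independent. Identifying $\pi$ is precisely where $k$-conjugation must enter: among the $\nu\subseteq(k^\ell)$ dominating a given $\mu$, the extremal surviving one is forced to \emph{reshape} any parts of $\mu$ that overflow the rectangle, and a naive guess such as $\pi(\lambda)=(\lambda^{\omega(k)})'$ already fails (e.g.\ for $\lambda=(1^j)$ with $\ell<j\le k$ the conjugate of $\lambda^{\omega(k)}=(j)$ has too many rows to survive), so the correct $\pi$ must be the genuinely reshaped extremal index. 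The bijectivity packaged in Theorem~\ref{k-conjugation-reformulation} strongly suggests $\pi$ is governed by the involution $\lambda\mapsto\lambda^{\omega(k)}$, and as a consistency check the extreme case $m=\min(\ell,k)$ should recover the full-ring basis $\{h_{\mu^{\omega(k)}}:\mu\subseteq(k^\ell)\}$ implicit in \cite{ReinerTudose}; this can serve as the base of an induction on $m$, exploiting also the symmetry $R^{\ell,k,m}\cong R^{k,\ell,m}$ from \eqref{omega-on-quotients}.

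The harder half is spanning, i.e.\ a \emph{straightening law} expressing each ``bad'' image $h_\mu$ (those with $\mu_1\le m$ but $\mu^{\omega(k)}\not\subseteq(k^\ell)$) as a $\QQ$-combination of good ones. The natural mechanism is the kernel of $\QQ[h_1,\dots,h_m]\twoheadrightarrow R^{\ell,k,m}$; rewriting a bad $h_\mu$ modulo these relations and re-collecting into the $h$-basis should decrease a suitable statistic (for instance the number of parts of $\mu$, or a measure of how far $\mu^{\omega(k)}$ protrudes from $(k^\ell)$), giving a terminating induction whose output lies in $B$. The main obstacle, and the crux of the whole argument, is to prove that this reduction stays inside the span of the good $h_\lambda$ and terminates --- equivalently, to control the interaction between the defining relations and $k$-conjugation tightly enough that no Schubert class outside the rectangle is needed and no good basis element is manufactured twice. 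Note that $R^{\ell,k,m}$ is generally \emph{not} a coordinate subspace in the Schubert basis (already $h_1^2=s_{(2)}+s_{(1,1)}$), so no dimension-counting shortcut is available and the straightening must be carried out honestly through the combinatorics of $(k+1)$-cores underlying $k$-conjugation.

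Once both halves are in hand, $B$ is a $\QQ$-basis, so $\Hilb(R^{\ell,k,m},q)=\sum_{\lambda\in B}q^{|\lambda|}$, which by the grouping above equals $1+\sum_{i=1}^m q^i\qbinom{k}{i}\qbinomprime{\ell}{i}{k}$, namely \eqref{main-conj}. As an alternative to attacking the straightening head-on, one could introduce the companion family obtained by replacing each $h_\lambda$ by the corresponding $k$-Schur function and prove that the two families differ by a unitriangular transition matrix, transferring the basis property between them; this is the pairing of ``two conjectural bases'' anticipated for Section~\ref{schur-k-schur}, and it may render the reduction step more transparent by replacing the opaque relations ideal with the better-understood triangularity of $k$-Schur functions against the Schur basis.
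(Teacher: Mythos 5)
The statement you were asked to prove is Conjecture~\ref{h-basis-conjecture}: it is an \emph{open conjecture} in the paper, not a theorem, and the paper contains no proof of it. What the paper does establish (Section~\ref{schur-k-schur}) is only the easy part that you also observe: each $h_\lambda$ with $\lambda_1\le m$ has image lying in the subalgebra $R^{\ell,k,m}$ (property \eqref{desired-basis-properties}(a)), and, via Theorem~\ref{k-conjugation-reformulation}, the generating function of the proposed index set matches the right-hand side of \eqref{main-conj}, so that the conjecture, if true, would imply the R-T Conjecture. Your proposal reproduces this reduction correctly, but everything beyond it is a research plan rather than a proof, and the plan's two central steps are precisely the open problems.

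Concretely: for linear independence you postulate a leading-term map $\pi$ with a triangularity property, note yourself that the natural candidate fails (your example $\lambda=(1^j)$ with $\ell<j\le k$), and then assert that ``the correct $\pi$ must be the genuinely reshaped extremal index'' without defining it, without proving the required extremality and nonvanishing of the corresponding Kostka coefficient, and without proving injectivity. For spanning you invoke a ``straightening law'' whose existence, closure within the span of the good $h_\lambda$, and termination you explicitly flag as ``the crux'' and leave unestablished. Neither gap can be closed by a dimension count, because for intermediate $m$ the dimension of $R^{\ell,k,m}$ is exactly what the R-T Conjecture predicts and is unknown --- this is why independence and spanning cannot be traded off against each other, and why the statement remains open. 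Finally, the alternative route you suggest, replacing each $h_\lambda$ by a $k$-Schur function and transferring the basis property, is itself Conjecture~\ref{k-schur-basis-conjecture} in the paper, i.e., another open conjecture rather than an available tool; note also that the paper points out that the genuine $k$-Schur basis $\{s^{(k)}_{\mu^{\omega(k)}}\}$ of $R^{\ell,k}$ satisfies \eqref{desired-basis-properties}(b) but \emph{fails} \eqref{desired-basis-properties}(a), so the interaction between $k$-conjugation and the filtration is subtler than a unitriangular change of basis would suggest.
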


Section~\ref{schur-k-schur} then reviews the
$k$-Schur function basis 
$\{s^{(k)}_\lambda\}_{\lambda_1 \leq k}$
for the subalgebra $\QQ[h_1,\ldots,h_k]$ of the
ring of symmetric functions $\Lambda:=\QQ[h_1,h_2,\ldots]$
in infinitely many variables.
It also reviews how the fundamental involution
$\Lambda \overset{\omega}{\rightarrow} \Lambda$
swapping $e_i \leftrightarrow h_i$ permutes the
$k$-Schur functions 
$
\omega(s_\lambda^{(k)})=s_{\lambda^{\omega(k)}}^{(k)}
$
according to the {\it $k$-conjugation} involution 
$
\lambda \leftrightarrow \lambda^{\omega(k)}.
$

\begin{conjecture}
\label{k-schur-basis-conjecture}
For $1 \leq m \leq \min(\ell,k)$, 
$
\{
s^{(\lambda_1)}_\lambda: \lambda_1 \leq m \text{ and }
\lambda^{\omega(k)} \subseteq (k^\ell)
\}
$
is a $\QQ$-basis of $R^{\ell,k,m}$.
\end{conjecture}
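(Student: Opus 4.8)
The plan is to prove the stronger ``single basis'' statement and then filter it: first show that $B := \{s^{(\lambda_1)}_\lambda : \lambda^{\omega(k)} \subseteq (k^\ell)\}$ is a $\QQ$-basis of the whole ring $R^{\ell,k}$, and then that its filtration by the largest part $\lambda_1$ matches the algebra filtration $R^{\ell,k,0} \subseteq R^{\ell,k,1} \subseteq \cdots$. Observe first that each $s^{(\lambda_1)}_\lambda$ lies in $\QQ[h_1,\ldots,h_{\lambda_1}] \subseteq \QQ[h_1,\ldots,h_m]$ whenever $\lambda_1 \le m$, so that $B_m := \{s^{(\lambda_1)}_\lambda : \lambda_1 \le m,\ \lambda^{\omega(k)}\subseteq(k^\ell)\}$ is a nested family landing inside $R^{\ell,k,m}$; the conjecture is exactly the assertion that each $B_m$ is a basis of $R^{\ell,k,m}$. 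Since $k$-conjugation is a size-preserving involution on $k$-bounded partitions carrying $\{\lambda : \lambda^{\omega(k)} \subseteq (k^\ell)\}$ bijectively onto $\{\mu \subseteq (k^\ell)\}$, Theorem~\ref{k-conjugation-reformulation} (summed over $i$) shows that the graded cardinality of $B$ equals $\qbinom{k+\ell}{\ell} = \Hilb(R^{\ell,k},q)$. Thus for the full ring it suffices to prove that $B$ is linearly independent (equivalently, that it spans) in $R^{\ell,k}$.

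For independence in the full ring, I would set up a triangularity against the Schubert basis $\{\sigma_\nu : \nu \subseteq (k^\ell)\}$ of $R^{\ell,k}$, where $\sigma_\nu := \phi(s_\nu)$ for the quotient map $\phi : \Lambda \twoheadrightarrow R^{\ell,k}$ (so $\sigma_\nu = 0$ exactly when $\nu \not\subseteq (k^\ell)$). The two inputs are: (i) the unitriangular expansion of $k$-Schur functions in the Schur basis, with dominance-minimal term $s^{(k)}_\lambda = s_\lambda + \sum_{\mu \triangleright \lambda} c^{(k)}_{\lambda\mu}\, s_\mu$; and (ii) the identity $\omega(s^{(k)}_\lambda) = s^{(k)}_{\lambda^{\omega(k)}}$ together with the isomorphism $R^{\ell,k}\cong R^{k,\ell}$ induced by $\omega$, which is the mechanism by which the box condition $\lambda^{\omega(k)}\subseteq(k^\ell)$ governs the image $\phi(s^{(\lambda_1)}_\lambda)$. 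The goal is to show that the good elements form a triangular, hence independent, family on $\{\sigma_\nu\}$ while the bad ones ($\lambda^{\omega(k)}\not\subseteq(k^\ell)$) fall into their span, so that, together with the cardinality count above, $B$ is upgraded to a basis of $R^{\ell,k}$.

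The genuine difficulty is the mismatch of parameters: the basis element is $s^{(\lambda_1)}_\lambda$ (parameter equal to the largest part), whereas the box condition is phrased through the $k$-conjugate (parameter $k$), and the inputs (i)--(ii) are cleanest at the single parameter $k$. A key lemma I would therefore need is a comparison result expressing $s^{(\lambda_1)}_\lambda$ through the parameter-$k$ functions modulo $\ker\phi$, or, equivalently, a proof that $\{s^{(\lambda_1)}_\lambda : \lambda_1 \le m\}$ is a filtered basis of $\QQ[h_1,\ldots,h_m]$ whose transition to $\{h_\lambda : \lambda_1 \le m\}$ is unitriangular for an order under which $\{\lambda : \lambda^{\omega(k)}\subseteq(k^\ell)\}$ is an ideal. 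This ``order-ideal'' property — that $k$-conjugation converts the bound on the number of rows of $\lambda^{\omega(k)}$ into a downward-closed condition in the relevant order — is the step I expect to be most delicate, and it is exactly what would let Conjecture~\ref{k-schur-basis-conjecture} be deduced from, or run in parallel with, Conjecture~\ref{h-basis-conjecture}.

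Finally, the filtration-compatibility — that $B_m$ not only is independent in $R^{\ell,k,m}$ (which is automatic once $B$ is a basis of $R^{\ell,k}$) but in fact \emph{spans} $R^{\ell,k,m}$ — is where the content of the R-T Conjecture is concentrated. Independence of $B_m$ gives for free the lower bound $\dim_\QQ R^{\ell,k,m}_d \ge \#\{\lambda : \lambda_1 \le m,\ \lambda^{\omega(k)}\subseteq(k^\ell),\ |\lambda|=d\}$, i.e.\ the ``$\ge$'' half of \eqref{main-conj}; the reverse inequality (spanning) is equivalent to the still-open upper bound in R-T. I would thus expect the independence/triangularity half to be accessible by the symmetric-function machinery above, with the spanning of each $R^{\ell,k,m}$ remaining the main obstacle — to be attacked either by an explicit straightening of products $h_\lambda$ into $B_m$ modulo $\ker\phi$, or by an inductive analysis of the quotients $R^{\ell,k,m}/R^{\ell,k,m-1}$, whose new classes are indexed precisely by the $\lambda$ with $\lambda_1 = m$.
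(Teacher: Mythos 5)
There is a genuine gap---in fact the statement you were given is not proved in the paper at all: it is stated there as an open conjecture (Conjecture~\ref{k-schur-basis-conjecture}), supported only by the counting identity of Theorem~\ref{k-conjugation-reformulation} and by the observation that the proposed elements satisfy condition \eqref{desired-basis-properties}(a). Your proposal, as you yourself acknowledge, is a research outline rather than a proof, and the two steps you leave open are precisely the open content of the conjecture. What you do establish is exactly what Section~\ref{schur-k-schur} of the paper establishes: each $s^{(\lambda_1)}_\lambda$ lies in $\QQ[h_1,\ldots,h_{\lambda_1}]$, so its image lies in the subalgebra $R^{\ell,k,\lambda_1}$ (property \eqref{desired-basis-properties}(a)); and since $k$-conjugation is a size-preserving involution, the index set $P^{\ell,k,m}$ has graded cardinality equal to the right-hand side of \eqref{main-conj}, so that independence of your set $B_m$ would yield the coefficientwise inequality $\geq$ (the paper's Remark after \eqref{desired-basis-properties}), and a basis of the full ring would follow from independence plus the count. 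Neither the independence nor the spanning is carried out, by you or by the paper.

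Moreover, the independence step as you sketch it hits a concrete obstruction worth naming. The unitriangularity \eqref{Schur-unitriangularity}, applied with parameter $\lambda_1$ in place of $k$, gives
$s^{(\lambda_1)}_\lambda = s_\lambda + \sum_{\mu \triangleright \lambda} d_{\lambda\mu}^{(\lambda_1)} s_\mu$,
but for $\lambda \in P^{\ell,k}$ the dominance-leading term $s_\lambda$ need not survive in the quotient $R^{\ell,k}$: the defining condition of $P^{\ell,k}$ is $\lambda^{\omega(k)} \subseteq (k^\ell)$, \emph{not} $\lambda \subseteq (k^\ell)$, and whenever $\lambda$ has more than $\ell$ nonzero parts the image of $s_\lambda$ is zero (its image survives exactly when $\lambda \subseteq (k^\ell)$). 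So the images of the elements of $B$ have no visible leading Schur term, and triangularity against the Schubert basis cannot be read off from \eqref{Schur-unitriangularity}. This is exactly why the paper contrasts your set with the parameter-$k$ family $\{s^{(k)}_{\mu^{\omega(k)}}\}_{\mu \in P^{\ell,k}}$, which \emph{is} known to be a basis of $R^{\ell,k}$ (property \eqref{desired-basis-properties}(b)) but fails (a); the ``mismatch of parameters'' you identify is the crux, and your proposed key lemma relating $s^{(\lambda_1)}_\lambda$ to the parameter-$k$ functions modulo the kernel of $\Lambda_\QQ \twoheadrightarrow R^{\ell,k}$ is unproven. Finally, as you note, spanning of $B_m$ in $R^{\ell,k,m}$ is equivalent to the R-T Conjecture itself, which remains open for all $1 < m < \min(\ell,k)$; so no part of your outline closes the gap between ``plausible and consistent with the counting'' and ``proved.''
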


Section~\ref{schur-k-schur} also explains why either of Conjecture
\ref{h-basis-conjecture} or \ref{k-schur-basis-conjecture}
would imply the R-T Conjecture.

\subsection{The Lagrangian Grassmannian}
Section~\ref{Lagrangian-Grassmannian-section} concerns 
an analogue of the R-T Conjecture for the cohomology ring of the {\it Lagrangian Grassmannian} $\LG(n,\mathbb{C}^{2n})$.  Recall that this is the space of
all maximal isotropic ($n$-dimensional) subspaces of $\CC^{2n}$ endowed with a symplectic bilinear form.  Its cohomology ring $R_{\LG}^n:=H^*(\LG(n,\mathbb{C}^{2n}),\QQ)$ 
has presentation \cite[Thm. 1 with $q=0$]{KreschTamvakis-lagrangian}:
\begin{equation}
\label{Lagrangian-cohomology-presentation}
R_{\LG}^n
\cong 
\QQ[e_1,e_2,\ldots,e_n] 
\,\,\, / \,\,\,
\Big(
e_i^2 + 2\sum_{k=1}^{n-i} (-1)^k e_{i+k} e_{i-k}
\Big)_{i=1,2,\ldots,n}
\end{equation}
with $e_0:=1, e_i=0$ if $i \not\in \{0,1,\ldots,n \}$.
As with the Grassmannian, the cohomology vanishes outside
of even degrees, and after halving the grading, one has $\deg(e_i)=i$.
The Hilbert series for $R^{n}_{\LG}$ is
$$
\Hilb( R^{n}_{\LG}, q ) 
= [2]_{q} [2]_{q^2} [2]_{q^3} \cdots [2]_{q^n}
=(1+q)(1+q^2)(1+q^3) \cdots (1+q^n),
$$
a $q$-analogue of the number $2^n$. 
Let $R^{n,m}_{\LG}$ denote the $\QQ$-subalgebra of $R^{n}_{\LG}$ generated by its elements of
degree at most $m$, or equivalently, by $e_1,e_2,\ldots,e_m$.
To describe the Hilbert series of this subalgebra, we introduce 
yet another $q$-analogue of
a binomial coefficient\footnote{At $q=1$ this sums to
$\sum_{j=0}^{n-i} \binom{i+j}{i}=\binom{n+1}{i+1}$, again via the hockey-stick identity.}
$$
\qbinomprimeprime{n+1}{i+1}:=
q^i\displaystyle\sum_{j=0}^{n-i} q^{\binom{j+1}{2}}\qbinom{i+j}{i}
$$
Our second main contribution is the following conjecture.
\begin{conjecture}
\label{Lagrangian-conjecture}
For $m=1,2,\ldots,n$, one has
$$
\Hilb(R_{\LG}^{n,m},q)
=1+\sum_{\substack{1 \leq i \leq m\\ i\text{ odd}}}
\qbinomprimeprime{n+1}{i+1}.
$$
\end{conjecture}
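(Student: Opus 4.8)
The plan is to mirror the strategy that Section~\ref{schur-k-schur} proposes for the Grassmannian: first pin down the combinatorial meaning of the right-hand side of Conjecture~\ref{Lagrangian-conjecture}, and then produce an explicit basis of $R^{n,m}_{\LG}$ whose degree-generating function matches it. Before anything else I would explain why the summation ranges over odd $i$ only, since this is forced by the algebra and is the cleanest part. Writing $e(t)=\sum_i e_i t^i$, the relations in~\eqref{Lagrangian-cohomology-presentation} are exactly the even-degree coefficients of the identity $e(t)e(-t)=1$ (the odd-degree coefficients vanish identically). Extracting the $k=i$ term of the $i$-th relation gives $2(-1)^i e_{2i} = -e_i^2 - 2\sum_{k=1}^{i-1}(-1)^k e_{i+k}e_{i-k}$ whenever $2i \leq n$, so each even generator $e_{2i}$ already lies in $\QQ[e_1,e_3,\ldots,e_{2i-1}]$ (and $e_{2i}=0$ once $2i>n$). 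Hence $R^{n,m}_{\LG}=\QQ[e_1,e_3,\ldots,e_{m'}]$ where $m'$ is the largest odd integer $\leq m$, so that $\Hilb(R^{n,m}_{\LG},q)$ depends only on $m'$, exactly the feature of the conjectured formula. It therefore suffices to treat $m=m'$ odd and compute $\Hilb(\QQ[e_1,e_3,\ldots,e_m],q)$.

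Next I would give the right-hand side a combinatorial model, the Lagrangian analogue of the $i$-vacancy interpretation~\eqref{R-T-vacancy-interpretation} and Theorem~\ref{k-conjugation-reformulation}. The Schubert basis of $R^n_{\LG}$ is indexed by strict partitions $\lambda$ contained in the staircase $(n,n-1,\ldots,1)$, with $\deg\sigma_\lambda=|\lambda|$, so $\Hilb(R^n_{\LG},q)=\sum_\lambda q^{|\lambda|}=\prod_{j=1}^n(1+q^j)$. The goal is an explicit family $\mathcal{S}_i$ of such strict partitions, one for each odd $i$, with $\qbinomprimeprime{n+1}{i+1}=\sum_{\lambda\in\mathcal{S}_i}q^{|\lambda|}$ and with $\{\emptyset\}\sqcup\bigsqcup_{i\text{ odd}}\mathcal{S}_i$ equal to all strict $\lambda\subseteq(n,\ldots,1)$; the factor $q^{\binom{j+1}{2}}\qbinom{i+j}{i}$ inside $\qbinomprimeprime{n+1}{i+1}$ suggests reading $\lambda$ as a staircase $(j,\ldots,1)$ decorated by a partition in an $i\times j$ box. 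The total statement $m=n$ reduces to the $q$-identity $\prod_{j=1}^n(1+q^j)=1+\sum_{i\text{ odd}}\qbinomprimeprime{n+1}{i+1}$, which I would prove by $q$-series manipulation starting from the standard expansion $\prod_{j=1}^n(1+q^j)=\sum_{r=0}^n q^{\binom{r+1}{2}}\qbinom{n}{r}$ and regrouping. The opposite extreme $m=1$ is already a theorem: $\QQ[e_1]$ has Hilbert series $1+q+\cdots+q^{\binom{n+1}{2}}$, because $e_1$ is (a scalar multiple of) the ample Schubert divisor class, so hard Lefschetz forces $e_1^d\neq 0$ up to the top degree $\binom{n+1}{2}=\dim_\CC\LG(n,\CC^{2n})$; this matches $1+\qbinomprimeprime{n+1}{2}$ via the telescoping identity $q\sum_{j=0}^{n-1}q^{\binom{j+1}{2}}[j+1]_q=q+q^2+\cdots+q^{\binom{n+1}{2}}$.

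With the model in hand I would formulate the Lagrangian counterparts of Conjectures~\ref{h-basis-conjecture} and~\ref{k-schur-basis-conjecture}: that the monomials $e_\mu:=\prod_s e_{\mu_s}$, ranging over partitions $\mu$ into odd parts at most $m$ subject to a bounding condition matching $\{\emptyset\}\cup\bigcup_{i\leq m}\mathcal{S}_i$ (transported through Glaisher's bijection between strict partitions and partitions into odd parts), form a $\QQ$-basis of $R^{n,m}_{\LG}$; and, alternatively, that the $k$-Schur $P$-functions attached to the $\mathcal{S}_i$ do. Either basis statement, combined with the model of the previous paragraph, yields the conjecture, since both candidate families then have degree-generating function $1+\sum_{i\leq m\text{ odd}}\qbinomprimeprime{n+1}{i+1}$.

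The hard part is exactly this basis step, and it is of the same nature as the still-open Grassmannian Conjectures~\ref{h-basis-conjecture} and~\ref{k-schur-basis-conjecture}: the candidate elements are not a subset of the Schubert basis, so linear independence is not free, and spanning amounts to controlling the kernel of the surjection $\QQ[x_1,x_3,\ldots,x_m]\twoheadrightarrow\QQ[e_1,e_3,\ldots,e_m]\subseteq R^n_{\LG}$, an elimination problem. One structural input I would try to exploit is that $R^n_{\LG}$ is itself a complete intersection on its odd generators: from $\prod_{j=1}^n(1+q^j)=\prod_{j=1}^n\frac{1-q^{2j}}{1-q^j}$ the even factors of the denominator cancel, exhibiting $\lceil n/2\rceil$ generators in odd degrees $\leq n$ and the same number of relations in even degrees between $n$ and $2n$. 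Combining this with an $e_m$-adic filtration realizing the increment $\Hilb(\QQ[e_1,\ldots,e_m],q)-\Hilb(\QQ[e_1,\ldots,e_{m-2}],q)=\qbinomprimeprime{n+1}{m+1}$ for odd $m$, I would run an induction on $m$. Proving a straightening law that rewrites an arbitrary monomial in $e_1,e_3,\ldots,e_m$ in the proposed basis, modulo the subalgebra $\QQ[e_1,\ldots,e_{m-2}]$, is where I expect the genuine difficulty to lie.
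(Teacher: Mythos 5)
The statement you were asked to prove is an \emph{open conjecture}: the paper itself does not prove it, and only verifies the two extreme cases $m=1$ and $m=n$ in Section~\ref{Lagrangian-Grassmannian-section}. Your proposal correctly recognizes this, and the parts you do establish coincide closely with what the paper establishes. Your reduction to odd $m$ --- observing that the relations in \eqref{Lagrangian-cohomology-presentation} are the even coefficients of $e(t)e(-t)=1$, so that each $e_{2i}$ is a polynomial in $e_1,\ldots,e_{2i-1}$ --- is correct, and is a sharper form of the paper's remark that $R^{n,m}_{\LG}=R^{n,m-1}_{\LG}$ for $m$ even. Your $m=1$ argument (hard Lefschetz gives $e_1^{\binom{n+1}{2}}\neq 0$, plus the telescoping evaluation of $1+\qbinomprimeprime{n+1}{2}$) is exactly the paper's. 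For $m=n$, the paper proves the identity $\prod_{j=1}^n(1+q^j)=1+\sum_{i\text{ odd}}\qbinomprimeprime{n+1}{i+1}$ bijectively (Proposition~\ref{Lagrangian-filtration-q-identity}): every nonempty strict $\lambda\subseteq\Delta_n$ decomposes uniquely into an odd block of $i$ cells ending the first row, a staircase $\Delta_j$, and a partition $\mu\subseteq (i^j)$. You propose instead an uncarried-out $q$-series regrouping starting from $\prod_{j=1}^n(1+q^j)=\sum_{r}q^{\binom{r+1}{2}}\smallqbinom{n}{r}$, but the combinatorial reading you sketch --- a staircase $(j,\ldots,1)$ decorated by a partition in an $i\times j$ box --- is precisely the paper's bijection, so this is the same idea in different clothing rather than a genuinely different route.

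The genuine gap, which you name yourself, is the basis step for general odd $m$: exhibiting elements of $R^{n,m}_{\LG}$ indexed by your sets $\mathcal{S}_i$, $i\le m$, that are linearly independent and span. This is not a defect of your write-up so much as the actual open content of the conjecture; the paper offers nothing here either, beyond the parallel and equally conjectural Grassmannian bases of Conjectures~\ref{h-basis-conjecture} and~\ref{k-schur-basis-conjecture}, and its stated evidence for Conjecture~\ref{Lagrangian-conjecture} is computational. Your complete-intersection observation (generators in odd degrees $\le n$, relations in even degrees in the interval $(n,2n]$, both $\lceil n/2\rceil$ in number) is correct and could plausibly feed an inductive attack, but the straightening law you would need is exactly where the difficulty lies. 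In short: your proposal reproduces everything the paper actually proves about this conjecture, by essentially the same arguments, and correctly isolates as unproven exactly what the paper also leaves unproven; just be careful to present the $m=n$ identity with a completed proof (the bijective decomposition is the cleanest) rather than as a promised manipulation.
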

In particular, this conjecture is consistent with the
fact that $R^{n,m}_{\LG}=R^{n,m-1}_{\LG}$ for $m$ even, since one can easily check that 
$e_m$ lies in the subalgebra
$R^{n,m-1}_{\LG}$ using the presentation \eqref{Lagrangian-cohomology-presentation}.
In Section~\ref{Lagrangian-Grassmannian-section}, we verify the two extreme cases $m=1$ and $m=n$
of Conjecture~\ref{Lagrangian-conjecture},
the latter case (Proposition~\ref{Lagrangian-filtration-q-identity})
being a $q$-analogue of the identity $2^n=\sum_{k \text{ even}} \binom{n+1}{k}$.

\section{Proof of equation \eqref{R-T-vacancy-interpretation}}
\label{vacancy-section}

In this section, we prove equation \eqref{R-T-vacancy-interpretation}. The proof is essentially a rephrasing of Reiner-Tudose's proof of the extreme case $m=\min(\ell,k)$ of the R-T Conjecture, given in \cite[Prop. 8]{ReinerTudose}.

Recall from the introduction that a
partition $\lambda=(\lambda_1,\ldots,\lambda_\ell)$
with $k \geq \lambda_1 \geq \cdots \geq \lambda_\ell \geq 0$
may be thought of as its {\it Ferrers diagram}, and
we write $\lambda \subseteq (k^\ell)$ to indicate
that its Ferrers diagram fits inside the $\ell \times k$
rectangular partition $(k^\ell)$.
If $\ell(\lambda)$ denotes the number of nonzero parts
of $\lambda$, then $\lambda$ also lies in the (possibly) smaller rectangle $(k^{\ell(\lambda)})$.  As in the Introduction, define $\lambda$ to be 
{\it $i$-vacant} if 
$i$ is the largest integer for which the complementary {\it skew partition} $(k^{\ell(\lambda)})/\lambda$ contains an $i \times (i-1)$
rectangle in its southeast corner. 

\begin{example}
Let $k=5, \ell=6$ and $\lambda=(4,4,3,3,1,0)$
with Ferrers diagram shown on the left below.
Then $\ell(\lambda)=5$ and $\lambda$ is $3$-vacant, because the complementary skew diagram $(5^5)/\lambda$ shown to its right contains a $3 \times 2$
rectangle (shaded) in its southeast corner, but not a $4 \times 3$
rectangle:

\begin{center}
\ytableausetup{smalltableaux}
\ytableausetup{centertableaux}
\ytableaushort
{} * {4,4,3,3,1,0}
\ytableaushort
{} * {4+1,4+1,3+2,3+2,1+4} * [*(green)]{0+0,0+0,3+2,3+2,3+2} 
\end{center}

\end{example}

To prove equation \eqref{R-T-vacancy-interpretation},
we must show that for $i=1,2,\ldots,\min(\ell,k)$, one has
$$
 \sum_{\substack{i-\text{vacant}\\\lambda \subseteq (k^\ell)}} q^{|\lambda|}
 =
 q^i \qbinom{k}{i}  \qbinomprime{\ell}{i}{k}
\text{ where }
\qbinomprime{\ell}{i}{k}:=
\sum_{j=0}^{\ell-i} 
q^{j(k-i+1)} \qbinom{i+j-1}{j}.
$$

\begin{proof}[Proof of \eqref{R-T-vacancy-interpretation}]
Let $\lambda \subset (k^\ell)$ be an $i$-vacant partition,
having $\ell(\lambda)$ nonzero parts. After defining $j:=\ell(\lambda)-i$, one can uniquely
decompose $\lambda$ into four subdiagrams, depicted below:

\begin{align*}
    \begin{tikzpicture}[scale=0.65]
    \fill[white] (0,0) rectangle (7, 1);
    \fill[white] (3.5, 1) rectangle (7, 5);
    \fill[red!20!yellow] (3.5, 5) rectangle (7,8);
    \fill[red!20!yellow] (1, 1) rectangle (3.5, 5);
    \fill[blue!20!white] (0, 1) rectangle (1, 5);
   \fill[blue!20!white] (0, 5) rectangle (3.5, 8);
       \draw [decorate,decoration={brace,amplitude=50pt},xshift=-3pt,yshift=0pt](-0.1,0) -- (-0.1,8.0) node [black, midway, xshift=-2.0cm]{$\ell$};
    \draw [decorate,decoration={brace,amplitude=10pt},xshift=-3pt,yshift=0pt](-0.1,1) -- (-0.1,8.0) node [black, midway, xshift=-0.7cm]{$\ell(\lambda)$};
    \draw [decorate,decoration={brace,amplitude=10pt},xshift=0pt,yshift=3pt](0,8.1) -- (7,8.1) node [black, midway, yshift=0.6cm]{$k$};
    \draw [decorate,decoration={brace,amplitude=10pt, mirror},xshift=4pt,yshift=0pt](7.1,1) -- (7.1,5) node [black, midway, xshift=0.6cm, yshift=0cm]{ $i$};
    \draw [decorate,decoration={brace,amplitude=10pt, mirror},xshift=4pt,yshift=0pt](7.1,5) -- (7.1,8) node [black, midway, xshift=0.6cm, yshift=0cm]{ $j$};
    \draw [decorate,decoration={brace,amplitude=4pt, mirror},xshift=0pt,yshift=0pt](3.5,0.9) -- (7,0.9) node [black, midway, xshift=0cm, yshift=-0.34cm]{ $i-1$};
     \draw [decorate,decoration={brace,amplitude=3pt, mirror},xshift=0pt,yshift=0pt](0,0.9) -- (1,0.9) node [black, midway, xshift=0cm, yshift=-0.34cm]{ $1$};
     \node at (0.5,3) {$\star$};
     \node at (1.75,6.5) {$\spadesuit$};
     \node at (5.25,6.5) {$\ddagger$};
     \node at (0.5,3) {$\clubsuit$};
     \node at (2.25,3) {$\dagger$};
    \draw (0,0) rectangle (7,8);
    \draw (0, 1) -- (7, 1);
    \draw (0, 5) -- (7, 5);
    \draw (3.5, 1) -- (3.5, 8);
    \draw (1, 1) -- (1, 5);
    \end{tikzpicture}
\end{align*}

\begin{itemize}
    \item A column $(1^i)$ of length $i$, denoted  ($\clubsuit$), accounting for the factor of $q^i$,
    \item a rectangle  $((k -i + 1)^{j} )$, denoted ($\spadesuit$), accounting for the factor of $q^{j(k-i+1)}$,
    \item a subpartition of the rectangle $((k - i)^i)$, denoted ($\dagger$), accounting for terms of $\smallqbinom{k}{i}$, and
    \item a subpartition of the rectangle $((i-1)^{j})$,
    denoted, ($\ddagger$), accounting for terms of $\smallqbinom{i+j-1}{j}$.
\end{itemize}
The uniqueness of this decomposition gives the proposition.
\end{proof}

\begin{example}
\label{vacant-partitions-example}
When $k=\ell=3$, the ring $R^{3,3}$ has Hilbert series
$$
\begin{aligned}
\Hilb(R^{3,3},q)
=\qbinom{6}{3}
&=1+q+2q+3q^2+3q^3+3q^4+3q^5+3q^6+2q^7+q^8+q^9\\
&=1+q \qbinom{3}{1} \qbinomprime{3}{1}{3} +
q^2 \qbinom{3}{2} \qbinomprime{3}{2}{3} +
q^3 \qbinom{3}{3} \qbinomprime{3}{3}{3}.
\end{aligned}
$$
The second term of sum in the previous line is
\begin{equation}
\label{1-vacant-term}
q \qbinom{3}{1}\qbinomprime{3}{1}{3}
=q\thinspace(1+q+q^2)\thinspace (1+q^3+q^6)
=q+q^2+q^3+q^4+q^5+q^6+q^7+q^8+q^9
\end{equation}
which is the sum of $q^{|\lambda|}$ over the $1$-vacant partitions $\lambda$ inside $(3^3)$:
$$
\ytableausetup{boxsize=0.5em}
\begin{tabular}{|c|c|c|c|c|c|c|c|c|c|}
\hline
$|\lambda|$ &1&2&3&4&5&6&7&8&9\\\hline
 & & & & & & & & & \\
$\lambda $ &
\ytableaushort{}*{1} &
\ytableaushort{}*{2} &
\ytableaushort{}*{3} &
\ytableaushort{,}*{3,1} &
\ytableaushort{,}*{3,2} &
\ytableaushort{,}*{3,3} &
\ytableaushort{,,}*{3,3,1} &
\ytableaushort{,,}*{3,3,2} &
\ytableaushort{,,}*{3,3,3}\\
 & & & & & & & & & \\
\hline
\end{tabular}
$$
The next term of the sum is
\begin{equation}
\label{2-vacant-term}
q^2 \qbinom{3}{2}\qbinomprime{3}{2}{3}
=q(1+q+q^2)(1+q^2+q^3)
=q^2+q^3+2q^4+2q^5+2q^6+q^7
\end{equation}
which is the sum of $q^{|\lambda|}$ over over the $2$-vacant partitions $\lambda$ inside $(3^3)$:
$$
\begin{tabular}{|c|c|c|c|c|c|c|}
\hline
$|\lambda|$ &2&3&4&5&6&7\\\hline
 & & & & & & \\
$\lambda $ &
\ytableaushort{,}*{1,1} &
\ytableaushort{,}*{2,1} &
\ytableaushort{,}*{2,2} &
\ytableaushort{,,}*{3,1,1} &
\ytableaushort{,,}*{3,2,1} &
\ytableaushort{,,}*{3,2,2}\\
 & & & & & &\\
  &
  &
  &
\ytableaushort{,,}*{2,1,1} &
\ytableaushort{,,}*{2,2,1} &
\ytableaushort{,,}*{2,2,2} &
\\
 & & & & & &\\
\hline
\end{tabular}
$$
The final term is
\begin{equation}
\label{3-vacant-term}
q^3 \qbinom{3}{3} \qbinomprime{3}{3}{3}
=q^3=q^{|\lambda|}
\quad \text{ for }\lambda=(1,1,1)=\ytableaushort{,,}*{1,1,1}
\end{equation}
since $\lambda=(1,1,1)$ is the unique $3$-vacant partition 
inside $(3^3)$.
\end{example}

\section{$k$-conjugation and proof of Theorem~\ref{k-conjugation-reformulation}}
\label{k-conjugation-section}

The goal of this section is to review the definition of {\it $k$-conjugation} from the theory of $k$-Schur functions, as introduced by
Lapointe, Lascoux and Morse \cite{LapointeLascouxMorse},
and use it to prove equation \eqref{k-conjugation-reformulation}.
We begin with the notions of {\it $k$-bounded partitions, $(k+1)$-cores}
and the bijection between them.

\begin{definition}
Given a partition $\lambda$, it is $k$-bounded if $\lambda_1 \leq k$.
It is a $(k+1)$-core if none of the {\it hooklengths} $h(x)$
for cells $x$ in $\lambda$ have $h(x)=k+1$;  here the
hooklength $h(x)$ for a cell $x$ in $\lambda$ is the number of
cells (including $x$ itself) which are weakly to its right in
the same row and weakly below it in the same column.
\end{definition}
Note that the definition of $(k+1)$-core 
permits boxes with  hooklength strictly greater than $k+1$.

\begin{example}
The partition $\lambda=(4,3,1,1)$ is $4$-bounded (and $5$-bounded, $6$-bounded, etc), but not $3$-bounded.  Labeling
its cells by their hook-lengths as shown here

$$
\ytableausetup{centertableaux, boxsize=1em}
\ytableaushort
{7431,521,2,1} * {4,3,1,1}
$$ 
one finds that $\lambda$ is a $6$-core (and an $8$-core, a $9$-core, etc), but not $4$-core nor a $5$-core nor a $7$-core.
\end{example}

\begin{prop}
 There is a bijective map
$\{(k+1)\text{-cores}\}\overset{p}{\to}\{k\text{-bounded partitions}\}$
that removes all boxes in a $(k+1)$-core with hook-length greater than $k+1$ and left-justifies the remaining boxes.
\end{prop}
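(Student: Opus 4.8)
The plan is to analyze the map $p$ row by row through the behavior of hook-lengths, and then to exhibit an explicit inverse. The first observation is that within any single row the hook-lengths strictly decrease from left to right, since moving one cell to the right decreases the arm by $1$ and cannot increase the leg. Consequently, in each row of a $(k+1)$-core $\mu$ the cells of hook-length $>k+1$ form an initial (leftmost) segment, so deleting them and left-justifying is well defined and leaves, in row $i$, exactly the cells of $\mu$ whose hook-length is $\le k$; here we use that $\mu$, being a $(k+1)$-core, has no cell of hook-length equal to $k+1$, so ``hook-length $>k+1$'' coincides with ``hook-length $>k$''. Thus $p(\mu)_i=\#\{x\text{ in row }i: h(x)\le k\}$. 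Since the hook-lengths in a fixed row are \emph{distinct} positive integers, at most $k$ of them can be $\le k$; hence $p(\mu)_i\le k$ and the output is automatically $k$-bounded.

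It remains to show (a) that $p(\mu)$ is genuinely a partition, i.e.\ $p(\mu)_i\ge p(\mu)_{i+1}$, and (b) that $p$ is a bijection. Both facts are exactly where the core hypothesis enters: for instance, with $k=2$ the non-core $(3,3)$ has one cell of hook-length $\le 2$ in its first row but two in its second, so deleting-and-left-justifying would produce the non-partition $(1,2)$. For (a) I would argue by contradiction. If $p(\mu)_i<p(\mu)_{i+1}$, then the leftmost cell of hook-length $\le k$ in row $i+1$ lies strictly to the left of the corresponding cell in row $i$; comparing the two rows column by column, together with the strict decrease of hook-lengths within each row, forces a cell of hook-length exactly $k+1$ to appear, contradicting the core property.

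The cleanest way to handle (a) and (b) simultaneously is to recast the whole picture in terms of \emph{first-column hook-lengths} (equivalently, beta-numbers on the $(k+1)$-abacus): writing $\beta_i=\mu_i+\ell-i$ for the beta-set $B$ of $\mu$, the hook-lengths in row $i$ are precisely $\{1,\dots,\beta_i\}\setminus\{\beta_i-\beta_j: j>i\}$, and $\mu$ is a $(k+1)$-core iff $\beta\in B$ with $\beta\ge k+1$ implies $\beta-(k+1)\in B$ (the abacus condition). In this language $p$ becomes a transparent operation on $B$, monotonicity of the parts is immediate, and the inverse is obtained by reversing it. I expect the main obstacle to be precisely this bijectivity step: constructing the inverse map, which re-inserts in each row the unique number of left-hand cells needed to restore the ``no hook-length $k+1$'' condition, and verifying both that it indeed lands among $(k+1)$-cores and that it is two-sided inverse to $p$. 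By contrast, the within-row hook-length monotonicity and the $k$-boundedness of the image are routine.
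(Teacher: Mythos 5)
Your opening analysis is correct and is the easy part: hook-lengths strictly decrease from left to right along a row, so the cells of hook-length $>k+1$ (equivalently $>k$, by the core condition) form an initial segment of each row, the map is well defined with $p(\mu)_i=\#\{x\in\text{row }i : h(x)\le k\}$, and distinctness of hook-lengths within a row forces $p(\mu)_i\le k$. Your counterexample $(3,3)$ for $k=2$ also correctly isolates where the core hypothesis enters, and your beta-set dictionary (hook-lengths in row $i$ equal $\{1,\dots,\beta_i\}\setminus\{\beta_i-\beta_j : j>i\}$, and the $(k+1)$-core condition equals the abacus condition) is accurate. The genuine gap is that the two claims constituting the actual content of the proposition --- that $p(\mu)$ is a partition and that $p$ is a bijection --- are only gestured at. Your monotonicity argument by contradiction is a one-sentence sketch, and the claim that on the abacus it becomes ``immediate'' is an overstatement: one needs the flushness characterization (in each residue class mod $k+1$ the bead positions are downward closed) and then a residue-by-residue comparison of the windows $[\beta_i-k,\beta_i]$ and $[\beta_{i+1}-k,\beta_{i+1}]$, with care about positions below zero; this is a real argument, not an observation. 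More seriously, the inverse map is never constructed: saying it ``re-inserts in each row the unique number of left-hand cells needed to restore the no-hook-length-$(k+1)$ condition'' presupposes exactly the existence and uniqueness that bijectivity requires, so as written the proposal assumes what it must prove.

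For context, the paper itself does not give a self-contained proof either: it describes the inverse $p^{-1}$ concretely (process the $k$-bounded partition from top row to bottom, sliding each row to the right until all of its boxes have hook-length at most $k$) and then defers the verification to \cite[Ch.~2, Prop.~1.3]{k-schur-affine-schubert}. So your abacus program, if carried through, would actually be more self-contained than the paper's treatment; but your proposal stops precisely where the paper's citation begins, at the construction of $p^{-1}$ and the proof that it is a two-sided inverse landing in $(k+1)$-cores.
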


\begin{proof}
Following \cite[Ch.2 Prop. 1.3]{k-schur-affine-schubert}, we describe the inverse map $p^{-1}$: Consider a $k$-bounded partition and work from top to bottom; for a given row, calculate the hook-lengths of its boxes; if there is a box with hook-length greater than $k$, slide this row to the right until all boxes have hook-length less than or equal to $k$. We omit the rest of the proof, which appears in \cite{k-schur-affine-schubert}.
\end{proof}

 This bijection is best described by illustrating an example labelled with hook-lengths. Considering again the $4$-bounded partition $\lambda=(4,3,1,1)$,we apply the map $p^{-1}$ to obtain a $5$-core $p^{-1}(\lambda)$:

\ytableausetup{centertableaux}
\begin{align*}
\ytableaushort
{7431,521,2,1} * {4,3,1,1} \quad\to\quad& \ytableaushort
{{11}8764321,6321,2,1} * {8,4,1,1}*[*(green)]{4,1}
\end{align*}

\begin{definition}
\label{k-conjugation-definition}
Given a $k$-bounded partition $\lambda$, we define its {\it $k$-conjugate} $\lambda^{\omega(k)}$ to be its image under the composite of these three bijections:
$$
\lambda 
\,\, \overset{p^{-1}}{\longmapsto} \,\,
p^{-1}(\lambda) 
\,\, \overset{(-)^t}{\longmapsto} \,\,
p^{-1}(\lambda)^t 
\,\, \overset{p}{\longmapsto} \,\,
p( p^{-1}(\lambda)^t ) =:
\lambda^{\omega(k)}
$$
That is, one first applies the bijection $p^{-1}$, 
then the usual {\it conjugation} or {\it transpose}
bijection $\mu \mapsto \mu^t$ that flips the Ferrers diagram across
its main diagonal,  and finally the bijection $p$.
\end{definition}

 For example, to obtain the $4$-conjugate of $\lambda=(4,3,1,1)$, we do the following operations:

\begin{center}
\begin{tikzcd}
\ytableaushort
{} * {4,3,1,1} \arrow[r, "p^{-1}"]\arrow[d, "4\text{-conjugate}"]& \ytableaushort
{} * {8,4,1,1}*[*(green)]{4,1}\arrow[d,"(-)^t"]\\
\ytableaushort
{} * {2,1,1,1,1,1,1,1} & \ytableaushort
{} * {4,2,2,2,1,1,1,1}*[*(green)]{2,1,1,1} \arrow[l, "p"]
\end{tikzcd}
\end{center}

A glance at Theorem \ref{k-conjugation-reformulation}
shows that it follows from equation \eqref{R-T-vacancy-interpretation}
once we explain the following.
\begin{prop}\label{k-schur-interpretation}
For any $i=1,2,\ldots,\min(\ell,k)$, one has
$$
 \sum_{\substack{i-\text{vacant}\\\lambda \subseteq (k^\ell)}} q^{|\lambda|}
 =
\sum_{\substack{\mu: \mu_1=i,\\
\mu^{\omega(k)} \subseteq (k^\ell)} }
q^{|\mu|}.
$$
\end{prop}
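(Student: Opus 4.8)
The plan is to take the $k$-conjugation map $\lambda\mapsto\lambda^{\omega(k)}$ of Definition~\ref{k-conjugation-definition} as the bijection matching the two sides. Two features are immediate from that definition. It is an \emph{involution}, since $p^{-1}\circ p=\mathrm{id}$ and transpose is an involution, so $(\lambda^{\omega(k)})^{\omega(k)}=\lambda$. It is also \emph{size-preserving}: the map $p$ retains exactly the boxes of hooklength $\le k+1$, and transpose preserves hooklengths, so $|\lambda^{\omega(k)}|$ counts the boxes of $p^{-1}(\lambda)$ of hooklength $\le k+1$, which is $|p(p^{-1}(\lambda))|=|\lambda|$. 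Now given an $i$-vacant $\lambda\subseteq(k^\ell)$, put $\mu:=\lambda^{\omega(k)}$; then $\mu^{\omega(k)}=\lambda\subseteq(k^\ell)$ and $|\mu|=|\lambda|$, so $\mu$ satisfies every condition on the right-hand side except possibly $\mu_1=i$. Consequently the whole statement reduces to the equivalence
\[
\lambda\subseteq(k^\ell)\text{ is }i\text{-vacant}\iff(\lambda^{\omega(k)})_1=i,
\]
and, $k$-conjugation being an involution, it is enough to prove this for all $\lambda\subseteq(k^\ell)$.

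To analyze the left-hand side I would convert $i$-vacancy into a count of first-column hooklengths. Write $\ell':=\ell(\lambda)$ and $\beta_r:=\lambda_r+\ell'-r$ for the first-column hooklengths of $\lambda$. A $t\times(t-1)$ rectangle sits in the southeast corner of $(k^{\ell'})/\lambda$ exactly when its topmost row, row $\ell'+1-t$, avoids the rightmost $t-1$ columns, i.e.\ $\lambda_{\ell'+1-t}+(t-1)\le k$; substituting $s=\ell'+1-t$ rewrites this as $\beta_s\le k$. Since the $\beta_s$ strictly decrease, the admissible values of $t$ form an initial segment, and the largest one equals $\#\{\,s:\beta_s\le k\,\}$. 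Hence $\lambda$ is $i$-vacant precisely when $i$ is the number of first-column hooklengths of $\lambda$ that are at most $k$.

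For the right-hand side I would unwind the three maps in $\lambda^{\omega(k)}=p\big(p^{-1}(\lambda)^t\big)$. Set $\kappa:=p^{-1}(\lambda)$ and $\gamma_r:=\kappa_r+\ell'-r$ for its first-column hooklengths, noting $\ell(\kappa)=\ell(\lambda)=\ell'$. Since $p$ keeps the boxes of hooklength $\le k+1$ and then left-justifies, the largest part $(\lambda^{\omega(k)})_1$ is the number of surviving boxes in the first row of $\kappa^t$; these correspond under transpose to the first column of $\kappa$, with hooklengths preserved, so $(\lambda^{\omega(k)})_1=\#\{\,r:\gamma_r\le k+1\,\}$ (the surviving first-column boxes form a bottom segment because the $\gamma_r$ strictly decrease).

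With both sides in hand, the equivalence becomes the identity $\#\{r:\beta_r\le k\}=\#\{r:\gamma_r\le k+1\}$, and this is the step I expect to be the main obstacle. Its content is understanding how passing from $\lambda$ to its $(k+1)$-core $\kappa=p^{-1}(\lambda)$ moves the first-column hooklengths relative to the threshold: the rows with $\beta_r\le k$ should be exactly the rows left unchanged, so that $\gamma_r=\beta_r\le k<k+1$ there, while every row with $\beta_r\ge k+1$ is pushed so that $\gamma_r>k+1$. I would prove this using the structure of $(k+1)$-cores---for example, the fact that the first-column hooklength set of a $(k+1)$-core is closed under subtracting $k+1$ (a ``$(k+1)$-flush'' beta-set), equivalently the $(k+1)$-runner abacus description in which forming the core pushes beads up their runners. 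Tracking how this bead motion carries first-column hooklengths across the value $k+1$ is the crux; granting it, the two counts agree, the displayed equivalence holds, and the proposition follows.
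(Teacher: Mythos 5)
Your proposal follows the same skeleton as the paper's proof: you reduce the proposition to the single claim that $\lambda\subseteq(k^\ell)$ is $i$-vacant if and only if $(\lambda^{\omega(k)})_1=i$, and your observations that $k$-conjugation is a size-preserving involution correctly justify that this claim suffices (the paper leaves the size-preservation implicit, so making it explicit is a plus). Your two reformulations are also correct: $i$-vacancy is equivalent to $i=\#\{r:\beta_r\le k\}$ for the first-column hooklengths $\beta_r=\lambda_r+\ell(\lambda)-r$, and $(\lambda^{\omega(k)})_1=\#\{r:\gamma_r\le k+1\}$ for the first-column hooklengths $\gamma_r$ of the core $\kappa=p^{-1}(\lambda)$, since $p$ keeps exactly the boxes of hooklength at most $k+1$ and transposition preserves hooklengths.

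The genuine gap is that the one statement carrying all the content --- that forming the core fixes every row with $\beta_r\le k$ (so $\gamma_r=\beta_r\le k$) and pushes every row with $\beta_r\ge k+1$ past the threshold (so $\gamma_r>k+1$) --- is precisely what you do not prove: you call it the crux and proceed ``granting it,'' gesturing at abacus/flush-beta-set technology without carrying out the argument. Everything before this point is bookkeeping, so as written the proof is incomplete at its central step. The step is true, and it follows in a few lines from the paper's own top-to-bottom sliding description of $p^{-1}$, with no abacus needed: hooklengths of boxes in row $r$ depend only on rows weakly below $r$, and when row $r$ is processed those rows have not yet moved, so the hooklength of its leftmost box at that moment is exactly $\beta_r$; hence row $r$ slides, by some amount $s_r\ge 1$, if and only if $\beta_r\ge k+1$. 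Since $\ell(\kappa)=\ell(\lambda)$ and $\kappa_r=\lambda_r+s_r$, one gets $\gamma_r=\beta_r+s_r$, so unslid rows have $\gamma_r=\beta_r\le k$ and slid rows have $\gamma_r\ge k+2$, which is your count identity. This is the same mechanism the paper exploits through its two marked boxes $\aleph$ and $\gimel$ in the vacancy picture; your hooklength-counting formulation is arguably cleaner and more uniform than the paper's picture-based case analysis, but it only becomes a proof once the sliding lemma above is supplied.
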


\begin{proof}
In fact, we will show the following
\begin{quote}
{\bf Claim:} A $k$-bounded partition $\lambda$ is $i$-vacant if and only if $\mu:=\lambda^{\omega(k)}$ has $\mu_1=i$.
\end{quote}
The claim implies $k$-conjugation gives a bijection
between the indexing sets in the two sums:
\begin{itemize}
\item Given $\lambda \subseteq (k^\ell)$, then
$\lambda$ is $k$-bounded, so that $\mu:=\lambda^{\omega(k)}$ is defined.  Furthermore, $\mu_1=i$ by the claim, with $\mu^{\omega(k)}=\lambda \subseteq (k^\ell)$.
\item Given $\mu$ with $\mu_1=i$ and $\lambda:=\mu^{\omega(k)} \subseteq (k^\ell)$, then one has $\lambda$ being $i$-vacant by the claim.
\end{itemize}

To verify the claim, assume one has $\lambda \subset (k^\ell)$ which is $i$-vacant, and let $j:=\ell(\lambda)-i$. This gives the picture below inside the rectangle $(k^\ell)$, with the rectangles labeled $\heartsuit$ and $\diamondsuit$ being empty. In order for $\lambda$ to be $i$-vacant, it must contain the full rectangle $((k-i+1)^j)$ labeled $\spadesuit$, along with the column $(1^i)$ labeled $\clubsuit$, in addition to requiring that the rectangle $(i^{i-1})$ labeled $\diamondsuit$ be empty;  the rectangles labeled $\ddagger$ and $\dagger$ each contain some (possibly empty) subpartition.

\begin{align*}
    \begin{tikzpicture}[scale=0.6]
    \fill[white] (0,0) rectangle (7, 1);
    \fill[gray!30!white] (3.5, 1) rectangle (7, 5);
    \fill[red!20!yellow] (3.5, 5) rectangle (7,8);
    \fill[red!20!yellow] (1, 1) rectangle (3.5, 5);
    \fill[red!20!white] (0,5) rectangle (1,6);
    \fill[green!20!white] (0, 4) rectangle (1, 5);
    \fill[blue!20!white] (0, 1) rectangle (1, 4);
    \fill[blue!20!white] (1, 5) rectangle (3.5, 8);
   \fill[blue!20!white] (0, 6) rectangle (1, 8);
    \draw [decorate,decoration={brace,amplitude=10pt},xshift=-3pt,yshift=0pt](-0.1,0) -- (-0.1,8.0) node [black, midway, xshift=-0.5cm]{$\ell$};
    \draw [decorate,decoration={brace,amplitude=10pt},xshift=0pt,yshift=3pt](0,8.1) -- (7,8.1) node [black, midway, yshift=0.6cm]{$k$};
    \draw [decorate,decoration={brace,amplitude=10pt, mirror},xshift=4pt,yshift=0pt](7.1,1) -- (7.1,5) node [black, midway, xshift=0.6cm, yshift=0cm]{ $i$};
    \draw [decorate,decoration={brace,amplitude=10pt, mirror},xshift=4pt,yshift=0pt](7.1,5) -- (7.1,8) node [black, midway, xshift=0.6cm, yshift=0cm]{ $j$};
    \draw [decorate,decoration={brace,amplitude=4pt, mirror},xshift=0pt,yshift=0pt](3.5,0.9) -- (7,0.9) node [black, midway, xshift=0cm, yshift=-0.34cm]{ $i-1$};
     \draw [decorate,decoration={brace,amplitude=6pt, mirror},xshift=4pt,yshift=0pt](1.1,4) -- (1.1,6) node [black, midway, xshift=0.5cm, yshift=.2cm]{ $2$};
     \draw [decorate,decoration={brace,amplitude=3pt, mirror},xshift=0pt,yshift=0pt](0,0.9) -- (1,0.9) node [black, midway, xshift=0cm, yshift=-0.34cm]{ $1$};
    \draw (0,0) rectangle (7,8);
    \draw (0, 1) -- (7, 1);
    \draw (0, 5) -- (7, 5);
    \draw (3.5, 1) -- (3.5, 8);
    \draw (1, 1) -- (1, 6);
    \draw (0, 4) rectangle (1, 5);
    \draw (0,5) rectangle (1,6);
     \node at (0.5,4.5) {$\aleph$};
      \node at (0.5,5.5) {$\gimel$};
       \node at (0.5,3) {$\clubsuit$};
     \node at (1.75,6.5) {$\spadesuit$};
     \node at (5.25,6.5) {$\ddagger$};
     \node at (2.25,3) {$\dagger$};
      \node at (5.25,3) {$\diamondsuit$};
      \node at (3,0.5) {$\heartsuit$};
    \end{tikzpicture}
\end{align*}


We want to show $\mu:=\lambda^{\omega(k)}$ has $\mu_1=i$. Note that in the process of constructing $\lambda^{\omega(k)}$, 
\begin{itemize}
\item box $\aleph$ in row $j+1$, column $1$ will not move, since its hooklength $h(\aleph) \leq (k-i+1)+i-1=k$,
\item box $\gimel$ in row $j$, column $1$ will slide right at least one box over, since 
$$h(\gimel) \geq (k-i+1)+(i+1)-1=k+1.$$
\end{itemize}
This means that $\mu=\lambda^{\omega(k)}$ has $\mu_1=i$ as claimed. 
Furthermore, if $\hat{\lambda} \subset (k^\ell)$ is assumed to be $\hat{i}$-vacant for some $\hat{i}\neq i$ then $\hat{\mu}=\hat{\lambda}^{\omega(k)}$ has $\hat{\mu}_1=\hat{i}\neq i$ as required.
\end{proof}

\begin{example}
\label{k-conjugate-filtration-example}
Continuing Example~\ref{vacant-partitions-example}, we can
reinterpret the summands in the expression
$$
\begin{aligned}
\Hilb(R^{3,3},q)
=\qbinom{6}{3}
&=1+q \qbinom{3}{1} \qbinomprime{3}{1}{3} +
q^2 \qbinom{3}{2} \qbinomprime{3}{2}{3} +
q^3 \qbinom{3}{3} \qbinomprime{3}{3}{3}.
\end{aligned}
$$
The last three terms in this expression were
interpreted before as sums of $q^{|\lambda|}$
ranging over $\lambda$ inside $(3^3)$ which are
$1$-vacant, or $2$-vacant or $3$-vacant, 
depicted in  
\eqref{1-vacant-term},
\eqref{2-vacant-term},
\eqref{3-vacant-term}.  Using Proposition~\ref{k-schur-interpretation}
they may now be reinterpreted as sums of $q^{|\lambda|}$ 
ranging over $\lambda$ with $\lambda_1=1$ or $\lambda_1=2$ or $\lambda_1=3$ whose $3$-conjugate lies inside $(3^3)$, as shown here:
$$
\ytableausetup{boxsize=0.5em}
\begin{tabular}{|c|c|c|c|c|c|c|c|c|c|}
\hline
$|\lambda|$ &1&2&3&4&5&6&7&8&9\\\hline
 & & & & & & & & & \\
$\lambda $ &
\ytableaushort{}*{1} &
\ytableaushort{,}*{1,1} &
\ytableaushort{,,}*{1,1,1} &
\ytableaushort{,,,}*{1,1,1,1} &
\ytableaushort{,,,,}*{1,1,1,1,1} &
\ytableaushort{,,,,,}*{1,1,1,1,1,1} &
\ytableaushort{,,,,,,}*{1,1,1,1,1,1,1} &
\ytableaushort{,,,,,,,}*{1,1,1,1,1,1,1,1} &
\ytableaushort{,,,,,,,,}*{1,1,1,1,1,1,1,1,1}\\
 & & & & & & & & & \\
\hline
\end{tabular}
\qquad
\begin{tabular}{|c|c|c|c|c|c|c|}
\hline
$|\lambda|$ &2&3&4&5&6&7\\\hline
 & & & & & & \\
$\lambda $ &
\ytableaushort{}*{2} &
\ytableaushort{,}*{2,1} &
\ytableaushort{,}*{2,2} &
\ytableaushort{,,,}*{2,1,1,1} &
\ytableaushort{,,,,}*{2,1,1,1,1} &
\ytableaushort{,,,,}*{2,2,1,1,1}\\
 & & & & & &\\
  &
  &
  &
\ytableaushort{,,}*{2,1,1} &
\ytableaushort{,,}*{2,2,1} &
\ytableaushort{,,,}*{2,2,1,1} &
\\
 & & & & & &\\
\hline
\end{tabular}
\qquad\ytableausetup{boxsize=0.5em}
\begin{tabular}{|c|c|}
\hline
$|\lambda|$ &3\\\hline
 &  \\
$\lambda $ &
\ytableaushort{}*{3} 
\\
 & \\
\hline
\end{tabular}
$$
\end{example}

\section{Conjectural bases and the R-T Conjecture}
\label{schur-k-schur}

Theorem~\ref{k-conjugation-reformulation} and \eqref{R-T-vacancy-interpretation}
let us rephrase the R-T Conjecture as follows: for $m=1,2,\ldots,\min(k,\ell)$,
\begin{equation}
\label{R-T-conjecture-rephrased}
\begin{aligned}
\Hilb(R^{\ell,k,m}, q) &=
1 + \sum_{i=1}^m q^i\qbinom{k}{i}  \qbinomprime{\ell}{i}{k}
\text{ where }
 q^i \qbinom{k}{i}  \qbinomprime{\ell}{i}{k}
 =
 \sum_{\substack{\lambda: \lambda_1=i,\\
\lambda^{\omega(k)} \subseteq (k^\ell)} }
q^{|\lambda|}\\
&= \sum_{\substack{\lambda: \lambda_1 \leq m,\\
\lambda^{\omega(k)} \subseteq (k^\ell)} }
q^{|\lambda|}
=\sum_{\lambda \in P^{\ell,k,m} }
q^{|\lambda|}
\end{aligned}
\end{equation}
where in the last sum we have introduced the following
indexing set of partitions:  
$$
P^{\ell,k,m}:=\{\lambda\mid \lambda_1\le m\text{, } \lambda^{\omega(k)}\subset (k^\ell)\}.
$$
This suggests a basis of $\{f_\lambda\}_{\lambda \in P^{\ell,k}}$ for $R^{\ell,k}$ consisting of
homogeneous $f_\lambda$ with $\deg(f_\lambda)=|\lambda|$,
where
\begin{equation}
    \label{big-indexing-set}
P^{\ell,k}:=P^{\ell,k,k}=\{\lambda\mid \lambda_1 \leq k\text{, } \lambda^{\omega(k)}\subset (k^\ell)\}.
\end{equation}
The R-T Conjecture is then
equivalent to the existence of such $\{f_\lambda\}_{\lambda \in P^{\ell,k}}$ with three
extra properties:
\begin{equation}
\label{desired-basis-properties}
    \begin{array}{rl}
   \text{(a)} & \text{Each }f_\lambda\text{ lies in the subalgebra }R^{\ell,k,\lambda_1}.\\
\text{(b)} &\text{The set }\{f_\lambda\}_{\lambda \in P^{\ell,k}}\text{ is a }\QQ\text{-basis for } R^{\ell,k}.\\
\text{(c)} &\text{Further, for  }m=1,2,\ldots,\min(k,\ell), \text{ the subset } \{f_\lambda\}_{\lambda \in P^{\ell,k,m}}\text{ is  a }\QQ\text{-basis for }R^{\ell,k,m}.
\end{array}
\end{equation}

\begin{remark}
In fact, properties \eqref{desired-basis-properties}(a,b) already suffice for the application to fixed point
theory that motivated the R-T Conjecture. Linear independence
of the basis elements within each subalgebra $R^{\ell,k,m}$ would show a coefficientwise
inequality
$$
\Hilb(R^{\ell,k,m}, q) \geq
1 + \sum_{i=1}^m q^i\qbinom{k}{i}  \qbinomprime{\ell}{i}{k}
$$
which suffices to prove the sequence of conjectures
\cite[Conjectures 2,3,4]{ReinerTudose}, as explained in \cite[\S 2,3]{ReinerTudose}.
\end{remark}

We next review some of the theory of symmetric functions and
$k$-Schur functions, in order to explain how it suggests
certain explicit subsets $\{f_\lambda\}_{\lambda \in P^{\ell,k}}$
with some of the properties in \eqref{desired-basis-properties}.

\subsection{The ring $R^{\ell,k}$ as a quotient of the ring of symmetric functions}
\label{classical-Schur-subsection}

The ring of symmetric functions $\Lambda_\QQ$ with $\QQ$
coefficients may be thought of as a polynomial ring
in infinitely many variables 
\begin{equation*}
\QQ[h_1,h_2,\ldots] = \Lambda_\QQ = \QQ[e_1,e_2,\ldots].
\end{equation*}
Alternatively (see, e.g., \cite[\S 7.6]{Stanley}),
it may be viewed as the quotient
$$
\Lambda_\QQ \cong \QQ[e_1,e_2,\ldots,h_1,h_2,\ldots] /
\Big(
\sum_{i=0}^d (-1)^i e_i h_{d-i}
\Big)_{d=1,2,\ldots}.
$$
Therefore, using the presentation \eqref{symmetric-Borel-presentation-for-Grassmannian}
for $R^{\ell,k}$, one can regard it as the image of
a surjection
$\Lambda_\QQ \twoheadrightarrow R^{\ell,k}$ 
in which one sets $e_i=0$ for $i > \ell$
and $h_j=0$ for $j > k$.  Note that the map $\omega: R^{\ell,k} \rightarrow R^{k,\ell}$ described in \eqref{omega-on-quotients} lifts to the {\it fundamental involution} $\omega$ on $\Lambda$, swapping $h_i \leftrightarrow e_i$.  Since the $\QQ$-subalgebra $\Lambda_\QQ^{(k)}$ of $\Lambda_\QQ$ generated
by its homogeneous elements of degree $k$ has these two descriptions
\begin{equation}
\label{k-Schur-function-ring}
\QQ[h_1,h_2,\ldots,h_k] = \Lambda^{(k)}_\QQ = \QQ[e_1,e_2,\ldots,e_k],
\end{equation}
one sees that $\omega$ restricts to an involution on $\Lambda^{(k)}_\QQ$.
Thus one has a commutative diagram, in which all of the
horizontal maps are denoted $\omega$, with both of top vertical maps inclusions, and both of the bottom vertical maps surjections:

\begin{equation}
\label{diagram-of-omegas}
\begin{tikzcd}
\Lambda_\QQ^{(k)} \arrow[r, "\omega"] \arrow[d, "i"]
& \Lambda_\QQ^{(k)} \arrow[d, "i"] \\
\Lambda_\QQ \arrow[r, "\omega"] \arrow[d, ""]
& \Lambda_\QQ \arrow[d, ""] \\
R^{k,\ell} \arrow[r, "\omega"]
& R^{\ell,k}
\end{tikzcd}
\end{equation}

\noindent
The symmetric functions $\Lambda_\QQ$ have several
important 
$\QQ$-bases indexed by all partitions $\lambda=(\lambda_1,\ldots,\lambda_\ell)$:
\begin{itemize}
    \item The {\it complete homogeneous basis} $\{h_\lambda\}$ where $h_\lambda:=h_{\lambda_1} h_{\lambda_2} \cdots h_{\lambda_\ell}$.
     \item The {\it elementary basis} $\{e_\lambda\}$ where $e_\lambda:=e_{\lambda_1} e_{\lambda_2} \cdots e_{\lambda_\ell}$.
 \item The {\it Schur functions}
$\{ s_\lambda \}$, expressible in $\{e_i\}, \{h_j\}$ 
via {\it Jacobi-Trudi formulas} \cite[\S 7.16]{Stanley}. 
\end{itemize}
The fundamental involution $\omega$ swaps $\omega: h_\lambda \leftrightarrow e_\lambda$, while permuting the Schur function basis via $\omega(s_\lambda)=s_{\lambda^t}$, where $\lambda^t$ is the {\it transpose}
or {\it conjugate} partition

The surjection $\Lambda_\QQ \twoheadrightarrow R^{\ell,k}$ has the remarkable
property \cite[\S 9.4, p. 152]{Fulton} that its kernel is the $\QQ$-span of $\{s_\lambda\}$ for which
$\lambda \not\subseteq (k^\ell)$. 
Thus $R^{\ell,k}$ has the images of 
$\{s_\lambda\}_{\lambda \subseteq (k^\ell)}$
as a $\QQ$-basis.  It then follows from unitriangularity for the expansions of $h_\lambda$ or $e_\lambda$
in the $\{s_\lambda\}$-basis \cite[eqn. (7.12.4)]{Stanley} 
that the images of
$
\{ h_\lambda \}_{\lambda \subseteq (k^\ell)}
$
and 
$
\{ e_{\lambda^t} \}_{\lambda \subseteq (k^\ell)}
=
\{ e_\lambda\}_{\lambda \subseteq (\ell^k)}
$
also give $\QQ$-bases for $R^{\ell,k}$.

\subsection{The $h_\lambda$ basis conjecture}
 Note that the basis elements $h_\lambda$ are homogeneous of degree $|\lambda|$,
and since $\lambda_1 \geq \cdots \geq \lambda_\ell$, the image of $h_\lambda$
in the quotient $R^{\ell,k}$ lies within the subalgebra $R^{\ell,k,\lambda_1}$.
Thus the images of $\{h_\lambda\}_{\lambda \in P^{\ell,k}}$
satisfy condition \eqref{desired-basis-properties}(a) above.  The following is then a rephrasing of
Conjecture~\ref{h-basis-conjecture} from the Introduction.

\vskip.1in
\noindent
{\bf Conjecture~\ref{h-basis-conjecture}.}
{\it
The images of $\{h_\lambda\}_{\lambda \in P^{\ell,k}}$ in $R^{\ell,k}$
satisfy the three conditions \eqref{desired-basis-properties} (a,b,c), giving a basis for $R^{\ell,k}$
that proves the R-T Conjecture.
}
\vskip.1in

The conjectural basis in Conjecture~\ref{h-basis-conjecture}
should be compared with a more obvious basis of
elements $\{h_\lambda\}$ for each $R^{\ell,k,m}$
that we explain here.
Abbreviate the ring presentation
\eqref{symmetric-Borel-presentation-for-Grassmannian} as 
$
R^{\ell,k} 
=\QQ[\mathbf{e},\mathbf{h}]/I^{\ell,k}.
$
One can choose the {\it lexicographic order} 
on the monomials in $\QQ[\mathbf{e},\mathbf{h}]$,
in which 
$$
e_k > \cdots > e_2 > e_1 >h_\ell > \cdots > h_2 > h_1.
$$
One can then compute the
{\it lex Gr\"obner basis} for the ideal
$I^{\ell,k}$, along with the set of accompanying
standard monomials, that is, the 
monomials that do not
divide the leading term of any Gr\"obner basis
element.  The theory of {\it Elimination ideals} \cite[Chap. 3]{CoxLittleOShea} implies that for each 
$m=1,2,\ldots,\min(k,\ell)$, the standard monomials
$\{h_\lambda\}$ only involving the
variables $h_1,\ldots,h_m$ will give a $\QQ$-basis
for $R^{\ell,k,m}$.

Unfortunately, the task of
explicitly describing in general
these lex-standard monomial bases $\{h_\lambda\}$
for the subalgebras $R^{\ell,k,m}$, and matching
their degrees to the prediction of the R-T Conjecture, has eluded us.  See the REU report \cite{REUreport} for conjectures in this regard when $\min(k,\ell) \leq 3$.  

This lex-standard monomial basis $\{h_\lambda\}$ generally 
differs from the one in Conjecture~\ref{h-basis-conjecture}.

\begin{example}
When $\ell=k=3$, the lex Gr\"obner basis
with $e_3 > e_2 > e_1> h_3 > h_2 > h_1$
can be computed using {\tt Macaulay2},
and the 
initial terms of its elements are
$
\{
h_1^{10}, h_2 h_1^6, h_2^2 h_1^3, h_2^3, h_3 h_1, h_3 h_2, h_3^2
\}.
$
Consequently the lex standard monomial basis elements
for $R^{\ell,k}$ will be as follows:
\begin{center}
\begin{tabular}{|c|c|c|c|c|c|c|c|c|c|c|}\hline
degree &0&1&2&3&4&5&6&7&8&9\\\hline\hline
$R^{\ell,k,0}$ &1&&&&&&&&&\\\hline
$R^{\ell,k,1}/R^{\ell,k,0}$ & &$h_{1}$&$h_{11}$&$h_{111}$&$h_{1111}$&$h_{11111}$&$h_{111111}$&$h_{1111111}$&$h_{11111111}$&$h_{111111111}$\\\hline
$R^{\ell,k,2}/R^{\ell,k,1}$ & & &$h_{2}$&$h_{21}$&$h_{211}$&$h_{2111}$&$h_{21111}$&$\mathbf{h_{211111}}$& & \\
 & & & & &$h_{22}$&$h_{221}$&$h_{2211}$& & & \\\hline
$R^{\ell,k,3}/R^{\ell,k,2}$ & & & &$h_{3}$& & & & & & \\\hline
\end{tabular}
\end{center}
Comparing with Example~\ref{k-conjugate-filtration-example},
one has Conjecture~\ref{h-basis-conjecture}
predicting a basis differing only in degree $7$, replacing the element $h_{211111}$ (shown in boldface above) with
the element $h_{22111}$.
\end{example}

\subsection{The $k$-Schur function basis conjecture}
As noted in the Introduction, the ring $R^{\ell,k}$
is generated by the images of 
$\{h_1,h_2,\ldots,h_k\}$ or $\{e_1,e_2,\ldots,e_k\}$, 
so that it can be viewed
as a quotient of the polynomial ring
$\Lambda_\QQ^{(k)}$ from \eqref{k-Schur-function-ring}
using the composite of the vertical maps
on either side of 
\eqref{diagram-of-omegas}.
Letting $P^{k}$ denote the set of $k$-bounded partitions $\lambda$, then this set 
indexes two obvious $\QQ$-bases for $\Lambda_\QQ^{(k)}$ which are exchanged by $\omega$, namely 
$\{ h_\lambda \}_{\lambda \in P^k}$ 
and $\{ e_\lambda \}_{\lambda \in P^k}$.

On the other hand, motivated by {\it Macdonald's positivity conjecture}, Lapointe, Lascoux and Morse \cite{LapointeLascouxMorse} introduced
the  $k$-Schur function basis 
$\{ s_\lambda^{(k)} \}_{\lambda \in P^k}$ for  $\Lambda_\QQ^{(k)}$,
playing a role analogous to the 
Schur function basis 
$\{s_\lambda\}$ for $\Lambda_\QQ$.  There
are several ways to define the
$k$-Schur functions, with an additional parameter $t$.
Here will use their ``parameterless" specialization
to $t=1$, which has the following unitriangular
relation  \cite[Property 28]{LapointeMorse} to the Schur functions:
\begin{equation}
\label{Schur-unitriangularity}
    s_\lambda^{(k)} = s_\lambda +\sum_{\substack{\mu \in P^k:\\\mu\triangleright\lambda}}
    d_{\lambda\mu}^{(k)}s_\mu
\end{equation}
where $\mu\triangleright\lambda$ is the {\it dominance partial ordering} \cite[\S 7.2]{Stanley} on partitions of a fixed size $n$, and $d_{\lambda\mu}^{(k)}$ are integer coefficients that we will not
describe or use here.  

The $k$-conjugation operation  
on the set $P^k$ of
$k$-bounded partitions from Definition~\ref{k-conjugation-definition}  was also introduced by Lapointe, Lascoux and Morse \cite{LapointeLascouxMorse}, 
to (conjecturally) describe the action of $\omega$ on $\Lambda_\QQ^{(k)}$ in this $k$-Schur function basis, and proven later by
Lapointe and Morse \cite[Theorem 38]{LapointeMorse}:

\begin{equation}
\label{involution-k-conj}
\omega( s_\lambda^{(k)} )
=s_{\lambda^{\omega(k)}}^{(k)}
\end{equation}

Since the images of 
$\{s_\lambda\}_{ \lambda \subseteq (k^\ell)}$
form a $\QQ$-basis for $R^{\ell,k}$, one can check that
\eqref{Schur-unitriangularity} implies
that the images of 
$\{s^{(k)}_\lambda\}_{ \lambda \subseteq (k^\ell)}$
also form a $\QQ$-basis for $R^{\ell,k}$.  The latter basis may also
be indexed by the set $P^{\ell,k}$ from
\eqref{big-indexing-set}, since 
$
s^{(k)}_\lambda=s^{(k)}_{\mu^{\omega(k)}}
$
where $\mu=\lambda^{\omega(k)}$,
and $\lambda$ lies in $(k^\ell)$ if and only if
$\mu$ lies in $P^{\ell,k}$.  While this basis 
$
\{
s^{(k)}_{\mu^{\omega(k)}}
\}_{\mu \in P^{\ell,k}}
$
satisfies property (4.3) (b), it fails property (4.3) (a), since the image of $s^{(k)}_{\mu^{\omega(k)}}$
 within $R^{\ell,k}$
need not lie in the subalgebra $R^{\ell,k,\mu_1}$. 
Instead
we posit the following, a rephrasing of Conjecture~\ref{k-schur-basis-conjecture}
from the Introduction.
\vskip.1in
\noindent
{\bf Conjecture~\ref{k-schur-basis-conjecture}.}
{\it
The images of $\{s^{(\lambda_1)}_\lambda\}_{\lambda \in P^{\ell,k}}$ in $R^{\ell,k}$
satisfy the three conditions \eqref{desired-basis-properties} (a,b,c), giving a basis for $R^{\ell,k}$ that proves the R-T Conjecture.
}
\vskip.1in

\section{Extreme cases for the Lagrangian Grassmannian 
Conjecture~\ref{Lagrangian-conjecture}}
\label{Lagrangian-Grassmannian-section}

Recall from the Introduction that the
Lagrangian Grassmannian $\LG(n,\mathbb{C}^{2n})$
is the space of
all maximal isotropic ($n$-dimensional) subspaces of $\CC^{2n}$ endowed with a symplectic bilinear form.  A presentation for its 
cohomology ring $R_{\LG}^n:=H^*(\LG(n,\mathbb{C}^{2n}),\QQ)$ was
given in \eqref{Lagrangian-cohomology-presentation}.

There is a CW-decomposition 
$\LG(n,\mathbb{C}^{2n})=\bigsqcup_\lambda X^\lambda_{\LG}$
into Schubert cells $X^\lambda_{\LG} \cong \CC^{|\lambda|}$
indexed by {\it strict partitions} 
$\lambda=(\lambda_1,\lambda_2,\ldots,\lambda_\ell)$
having 
$
n \geq \lambda_1 > \lambda_2 > \cdots > \lambda_\ell >0.
$
We will identify such strict partitions $\lambda$ with their {\it shifted Ferrers diagram}, 
having $\lambda_i$ boxes in row $i$, with each row shifted one unit right of the previous one.
Note that the condition $\lambda_1 \leq n$ implies that these $\lambda$ will lie inside
an {\it ambient triangle} $\Delta_n:=(n,n-1,\ldots,2,1,0)$. 

\begin{example}
Fix $n=5$. Shaded below is the shifted diagram for $\lambda= (4,2,1)$, inside the ambient triangle $\Delta_5=(5,4,3,2,1)$:
\begin{center}
\ytableausetup{smalltableaux}
\ytableausetup{centertableaux}
\ytableaushort
{} * {5,1+4,2+3,3+2,4+1} * [*(green)]{4,1+2,2+1}
\end{center}
\end{example}

\noindent
As a consequence of the Schubert cell decomposition,
the cohomology ring $R^{n}_{\LG}$ has Hilbert series

\begin{equation}
    \label{Lagrangian-hilb-repreated}
\Hilb( R^{n}_{\LG}, q ) 
=\sum_{\lambda \subseteq \Delta_n} q^{|\lambda|}
=(1+q)(1+q^2)(1+q^3) \cdots (1+q^n)
= [2]_{q} [2]_{q^2} [2]_{q^3} \cdots [2]_{q^n},
\end{equation}
a $q$-analogue of the number $2^n$.  

Define
$R^{n,m}_{\LG}$ to be the $\QQ$-subalgebra of $R^{n}_{\LG}$ generated by its 
elements of degree at most $m$, or equivalently, by the images of 
$e_1,e_2,\ldots,e_m$.
Then Conjecture~\ref{Lagrangian-conjecture}, which is suggested by extensive
computational evidence,
asserts the following:
for $m=1,2,\ldots,n$, one has
\begin{equation}
    \label{Lagrangian-conjecture-repeated}
\Hilb(R_{\LG}^{n,m},q)
=1+\sum_{\substack{1 \leq i \leq m\\ i\text{ odd}}}
\qbinomprimeprime{n+1}{i+1}
\end{equation}
where recall that we defined the following $q$-analogue of $\binom{n+1}{i+1}$:
\begin{equation}
\label{binomial-prime-prime-repeated}
\qbinomprimeprime{n+1}{i+1}:=
q^i\displaystyle\sum_{j=0}^{n-i} q^{\binom{j+1}{2}}\qbinom{i+j}{i}
\end{equation}
The next two subsections verify the two extreme cases $m=1$ and $m=n$ of this conjecture.

\subsection{The $m=1$ case of Conjecture~\ref{Lagrangian-conjecture}}

Verifying the $m=1$ case of the conjecture is the analogue
of \cite[Thm. 6, Rem. 7]{ReinerTudose} for the Grassmannian.
The $m=1$ case of \eqref{Lagrangian-conjecture-repeated} asserts
$$
\begin{aligned}
\Hilb(R_{\LG}^{n,1},q)
=1+ \qbinomprimeprime{n+1}{2} 
&=1+q^1\displaystyle\sum_{j=0}^{n-1} q^{\binom{j+1}{2}}\qbinom{1+j}{1}
=1+q \sum_{j=0}^{n-1} q^{\binom{j+1}{2}} [j+1]_q\\
&=1+q(q^0[1]_q + q^1 [2]_q+q^3[3]_q +q^6[4]_q + \cdots + q^{\binom{n}{2}}[n]_q) \\
&=1+q+q^2+q^3+q^4+\cdots+q^{\binom{n+1}{2}}
\end{aligned}
$$
Since \eqref{Lagrangian-hilb-repreated} shows that $R^n_\LG$ vanishes in degrees beyond $d:=\binom{n+1}{2}=\dim \LG(n,\mathbb{C}^{2n})$, the same holds for its subalgebra 
$R_{\LG}^{n,1}$ generated by $e_1$.  Thus the $m=1$ case of the conjecture is equivalent to
the nonvanishing assertion that the $d^{th}$ power $e_1^d \neq 0$ in $R_{\LG}$. 
There are two ways one can deduce this.
\begin{itemize}
 \item Since $R^n_{\LG} \cong H^*(X,\QQ)$
where $X=\LG(n,\mathbb{C}^{2n})$ is a smooth complex projective variety of dimension $d$, the nonvanishing
follows from the Hard Lefschetz Theorem \cite[p. 122]{GriffithsHarris}.
 \item Iterating a {\it Pieri formula} (see, e.g., \cite[eqn. (51)]{KreschTamvakis-lagrangian}) for $R^n_{\LG}$ shows that $e_1^d$ is a nonzero scalar multiple of the orientation class of $\LG(n,\mathbb{C}^{2n})$. This 
 multiple is the {\it degree} of $\LG(n,\CC^{2n})$ in its Pl\"ucker embedding, computed several times before, e.g., by Hiep and Tu \cite[\S 5 Prop. 2]{HiepTu}.
\end{itemize}

\subsection{The $m=n$ case of Conjecture~\ref{Lagrangian-conjecture}}

The other extreme case $m=n$ comes from the following proposition,
playing a role for $\LG(n,\CC^{2n})$ analogous to that of 
Proposition~\ref{k-schur-interpretation} for $\Gr(\ell,\CC^{k+\ell})$.

\begin{prop}
\label{Lagrangian-filtration-q-identity}
For $n \geq 0$, one has
$$
\sum_{\lambda \subseteq \Delta_n} q^{|\lambda|}
=1+\sum_{\substack{1 \leq i \leq n\\ i\text{ odd}}}
\qbinomprimeprime{n+1}{i+1}.
$$
\end{prop}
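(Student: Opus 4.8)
The plan is to prove the Proposition by showing that its right-hand side equals $\prod_{s=1}^{n}(1+q^s)$; since the left-hand side $\sum_{\lambda\subseteq\Delta_n}q^{|\lambda|}$ already equals this product by \eqref{Lagrangian-hilb-repreated}, this suffices. The only genuine difficulty is the restriction to odd $i$, and my idea is to access it as an ``odd part.'' Writing $c_i := q^i\sum_{j=0}^{n-i}q^{\binom{j+1}{2}}\qbinom{i+j}{i}$ for the $i$-th summand, one has $\sum_{i\text{ odd}}c_i = \tfrac12(F-G)$, where $F := \sum_{i=0}^{n}c_i$ and $G := \sum_{i=0}^{n}(-1)^i c_i$, because $\tfrac12\bigl(1-(-1)^i\bigr)$ is the indicator that $i$ is odd. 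Thus it remains to evaluate $F$ and $G$ separately.

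For both sums I would reindex by the total $r := i+j$, so that for each fixed $r$ with $0\le r\le n$ the index $i$ runs over $0\le i\le r$ and $c_i$ contributes $q^{\,i+\binom{r-i+1}{2}}\qbinom{r}{i}$. Substituting $k := r-i$ and using $\binom{k+1}{2}-k=\binom{k}{2}$, the inner sum over $i$ collapses to $q^r\sum_{k=0}^{r}(\pm 1)^k q^{\binom{k}{2}}\qbinom{r}{k}$, which the finite $q$-binomial theorem $\prod_{s=0}^{r-1}(1+q^s x)=\sum_{k=0}^{r}q^{\binom{k}{2}}\qbinom{r}{k}x^k$ evaluates at $x=1$ (for $F$) and at $x=-1$ (for $G$). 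At $x=1$ this yields $q^r\prod_{s=0}^{r-1}(1+q^s)=2q^r\prod_{s=1}^{r-1}(1+q^s)$ for $r\ge 1$; at $x=-1$ it yields $(-1)^r q^r\prod_{s=0}^{r-1}(1-q^s)$, which vanishes for every $r\ge 1$ thanks to the factor $1-q^0=0$. Hence $F = 1 + 2\sum_{r=1}^{n}q^r\prod_{s=1}^{r-1}(1+q^s)$ and $G=1$ (only the $r=0$ term survives in each).

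Combining, the right-hand side becomes $1+\tfrac12(F-G) = 1 + \sum_{r=1}^{n}q^r\prod_{s=1}^{r-1}(1+q^s)$. Each summand telescopes via the identity $q^r\prod_{s=1}^{r-1}(1+q^s)=\prod_{s=1}^{r}(1+q^s)-\prod_{s=1}^{r-1}(1+q^s)$, so the sum collapses to $\prod_{s=1}^{n}(1+q^s)-1$, and the right-hand side equals $\prod_{s=1}^{n}(1+q^s)$, exactly matching the left-hand side.

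The crux, and the step I expect to require the most care, is the parity restriction to odd $i$: everything hinges on recognizing the odd part as $\tfrac12(F-G)$ and on the clean vanishing of the alternating sum $G$, which comes from the $x=-1$ specialization of the $q$-binomial theorem being annihilated by its $s=0$ factor. By contrast, a bijective argument in the spirit of Proposition~\ref{k-schur-interpretation}, grouping the strict partitions $\lambda\subseteq\Delta_n$ by a ``type'' equal to $i$, seems harder to pin down: the most natural decomposition of $\lambda$ (by its number of parts $r$, writing $\lambda$ as the staircase $(r,r-1,\ldots,1)$ plus a partition in an $r\times(n-r)$ box) produces the blocks $q^{\binom{r+1}{2}}\qbinom{n}{r}$ rather than the $\qbinomprimeprime{n+1}{i+1}$, and it is precisely this regrouping into odd-indexed blocks that the generating-function computation carries out.
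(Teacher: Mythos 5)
Your proof is correct, and it takes a genuinely different route from the paper's. I checked the key steps: the reindexing by $r=i+j$ (valid since the region $0\le i$, $0\le j\le n-i$ is exactly $i+j\le n$), the exponent simplification $r-k+\binom{k+1}{2}=r+\binom{k}{2}$, the two specializations $x=\pm 1$ of Gauss's identity $\prod_{s=0}^{r-1}(1+q^sx)=\sum_{k=0}^r q^{\binom{k}{2}}\smallqbinom{r}{k}x^k$ (including the sign bookkeeping $(-1)^i=(-1)^r(-1)^k$ in $G$), and the telescoping $q^r\prod_{s=1}^{r-1}(1+q^s)=\prod_{s=1}^{r}(1+q^s)-\prod_{s=1}^{r-1}(1+q^s)$; the degenerate case $n=0$ also works since only $r=0$ survives in both $F$ and $G$. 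The paper instead proves the identity bijectively: each nonempty strict $\lambda\subseteq\Delta_n$ is uniquely encoded by a triple $(i,j,\mu)$ with $i$ odd, $0\le j\le n-i$, and $\mu\subseteq(i^j)$, where $j$ is the largest value with $\Delta_j\subseteq\lambda$ and $i:=\lambda_1-j$ odd; concretely $j$ equals $\ell(\lambda)$ or $\ell(\lambda)-1$ according to the parity of $\lambda_1-\ell(\lambda)$, as in \eqref{parity-cases}. This is precisely the regrouping you suspected was hard to pin down, and the parity dichotomy in the paper's choice of $j$ is the combinatorial shadow of your $\tfrac12(F-G)$ filter. What the bijection buys is a term-by-term interpretation: $\qbinomprimeprime{n+1}{i+1}$ is the generating function for the $\lambda\subseteq\Delta_n$ of ``type'' $i$, the shifted analogue of the vacancy/$k$-conjugation interpretation \eqref{R-T-vacancy-interpretation}, which is what one would want toward bases of the subalgebras $R^{n,m}_{\LG}$ refining the filtration. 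What your route buys is a short, mechanical verification whose only nontrivial input is the $q$-binomial theorem — with the clean vanishing of the alternating sum $G$ at $x=-1$ doing all the parity work — though it does not by itself identify which partitions contribute to which summand.
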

\begin{proof}
When $\lambda \subseteq \Delta_n$ is non-empty, 
we will uniquely define a triple $(i,j,\mu)$, where 
\begin{itemize}
    \item[(a)] $i$ is odd in the range $1 \leq i \leq n$, 
    corresponding to $i$ squares at the end of the first row of $\lambda$, darkly shaded below, and
    accounting for the $q^i$ on the right in
    \eqref{binomial-prime-prime-repeated},
    \item[(b)] $j$ is in the range $0 \leq n-i$, 
    corresponding to a subtriangle $\Delta_j \subseteq \lambda$,
    lightly shaded below, and
     accounting for the $q^{\binom{j+1}{2}}$ on the right in
    \eqref{binomial-prime-prime-repeated},
    \item[(c)] $\mu$ is the rest of $\lambda$, fitting inside a $j \times i$ rectangle $(i^j)$,
    below the $i$ first row squares from (a),
    depicted in white below,
     and accounting for the terms in $\qbinom{i+j}{i}$ on the right in
    \eqref{binomial-prime-prime-repeated}. 
\end{itemize}
To define $(i,j,\mu)$, given $\lambda$ having $\ell(\lambda)$ nonzero parts,
let $j$ be the maximum value such that 
$\Delta_j\subseteq\lambda$ and $i:=\lambda_1-j$ is {\it odd}. 
Equivalently, this means $j$ is defined by two cases:
\begin{equation}
\label{parity-cases}
j=
\begin{cases}
\ell(\lambda) & \text{, if } \lambda_1-\ell(\lambda) \text{ is odd} \\
\ell(\lambda) - 1 & \text{, if } \lambda_1-\ell(\lambda) \text{ is even}.
\end{cases}
\end{equation}
A few examples of the resulting decomposition of $\lambda$ into its parts (a),(b),(c) are illustrated here. 
In both (i) and (ii), $\lambda$ has $i=3$ and $j=4$, so that $\mu$ sits inside $4 \times 3$ rectangle $(3^4)$, but one has different lengths $\ell(\lambda)$.  In (iii), $i=1, j=5$ and $\mu$ fits 
inside a $5 \times 1$ rectangle $(1^5)$.

\begin{center}
\ytableausetup{centertableaux}
(i)$\ $
\ytableaushort
{} * {7,1+5,2+4,3+3,4+1} * [*(green)]{4,1+3,2+2,3+1} * [*(gray)]{4+3} 
$\quad$
(ii)
\ytableaushort
{} * {7,1+5,2+4,3+3} * [*(green)]{4,1+3,2+2,3+1} * [*(gray)]{4+3}
$\qquad$
(iii)
\ytableaushort
{} * {6,1+5,2+4,3+3,4+1} * [*(green)]{5,1+4,2+3,3+2,4+1} * [*(gray)]{5+1} 
\end{center}
Conversely, given any triple $(i,j,\mu)$ having $i$ odd in the range $1 \leq i \leq n$,
with $j$ in the range $0 \leq j \leq n-i$, and $\mu \subseteq (i^j)$, one can construct a corresponding $\lambda \subseteq \Delta_n$ having this triple as its parameters, by assembling its diagram from 
a triangle $\Delta_j$ together with $i$ more squares in the first row, placing the diagram of $\mu$ just below those $i$ square in the first row.  The two cases
in \eqref{parity-cases} correspond to whether
$
\ell(\mu)< \ell(\lambda)
$
or
$
\ell(\mu)= \ell(\lambda),
$
respectively.  This proves the proposition.
\end{proof}

\begin{question}
Is there a Lagrangian or shifted analogue of Theorem~\ref{k-conjugation-reformulation}, involving
some notion of {\it $k$-conjugation for shifted Young diagrams}?
Could this be related to an analogue of $k$-Schur functions
$s^{(k)}_\lambda$ for Schur's $Q$-functions $Q_\lambda$ in the Schubert calculus of $\LG(n,\CC^{2n})$?
\end{question}

\begin{remark} 
Related to the Lagrangian Grassmannian is
the Orthogonal Grassmannian, discussed for example in \cite{KreschTamvakis-orthogonal}.  One might ask for an analogue
of Conjecture~\ref{Lagrangian-conjecture} for its
cohomology ring $R^n_{\mathbb{OG}}$. We explain here why
the conjecture is the same. The
rings $R^n_{\mathbb{OG}}$ has a presentation (see, e.g., \cite[Thm. 1]{KreschTamvakis-orthogonal} with $q=0$) similar to that
of $R^n_{\LG}$.   In fact there is a graded
ring isomorphism $R^n_{\mathbb{OG}} \cong R^n_{\LG}$
stemming from the work of Borel, who showed that
both rings have a description (see, e.g., \cite[\S 3]{ReinerWooYong})
\begin{equation}
\label{lagrangian-orthogonal-isomorphism}
R^n_{\mathbb{OG}} \cong 
\left(
\mathbb{Q}[\boldx]/(\QQ[\boldx]^{B_n}_+)
\right)^{S_n}
\cong R^n_{\LG}
\end{equation}
as
the {\it $S_n$-invariant subalgebra}
of the {\it coinvariant algebra} $\mathbb{Q}[\boldx]/(\QQ[\boldx]^{B_n}_+$,
where $B_n$ is the {\it hyperoctahedral group} of $n \times n$ signed permutation matrices
acting on the polynomial ring $\QQ[\boldx]:=\QQ[x_1,\ldots,x_n]$, and the $S_n$ the parabolic subgroup of all permutation matrices inside $B_n$.
This is because the the Lagrangian and Orthogonal Grassmannians are homogeneous spaces for the groups $G=Sp_{2n}, SO_{2n+1}$,
which share $B_n$ as their Weyl group,
and are both quotients $G/P$ by parabolic subgroups $P$ that
correspond to the symmetric group $S_n$ inside $B_n$.  The
graded ring isomorphism \eqref{lagrangian-orthogonal-isomorphism} implies that
the subalgebras $R^{n,m}_{\mathbb{OG}}, R^{n,m}_{\LG}$ inside $R^n_{\mathbb{OG}}, R^n_{\LG}$ 
generated by the homogeneous
elements of degree at most $m$ will also be isomorphic as graded algebras, and have
the same Hilbert series.

\end{remark}



\section*{Acknowledgements}
Work of mentor and co-mentor supported by NSF grant DMS-1601961. The team also sincerely thanks the Polymath Jr.~REU faculty organizers 
(Kira Adaricheva, Ben Brubaker, Pat Devlin, Steven Miller,
Alexandra Seceleanu, Adam Sheffer, Yunus Zeytuncu)
for their vision and hard work in making this new bold new opportunity a reality.



\begin{thebibliography}{10}


\bibitem{CoxLittleOShea}
David A. Cox, John Little and Donal O'Shea.
Ideals, varieties, and algorithms.
An introduction to computational algebraic geometry and commutative algebra. Fourth edition. {\it Undergraduate Texts in Mathematics}. 
Springer, 2015.

\bibitem{Fulton}
William Fulton,
Young tableaux,
{\it London Mathematical Society Student Texts} {\bf 35},
Cambridge University Press, Cambridge, 1997.

\bibitem{GloverHomer}
Henry Glover and William D. Homer,
Endomorphisms of the cohomology ring of finite Grassmann manifolds. 
Geometric applications of homotopy theory (Proc. Conf., Evanston, Ill., 1977), I,170--193,
{\it Lecture Notes in Math.} {\bf 657}. Springer, Berlin, 1978.

\bibitem{GriffithsHarris}
Phillip Griffiths and Joe Harris, Principles of algebraic geometry. 
Wiley Classics Library. John Wiley \& Sons, Inc.,
New York, 1994.

\bibitem{HiepTu}
Dang T. Hiep and NguyenC. Tu,
An identity involving symmetric polynomials and the geometry of Lagrangian Grassmannians.
{\it J. Algebra} {\bf 565} (2021), 564--581.


\bibitem{Hoffman} Michael Hoffman, Endomorphisms of the cohomology of complex Grassmannians. {\it Trans. Amer. Math. Soc.} {\bf 281} (1984), 745--760.







\bibitem{KreschTamvakis-orthogonal}
Andrew Kresch and Harry Tamvakis,
Quantum cohomology of orthogonal Grassmannians. 
{\it Compos. Math.} {\bf 140} (2004), no. 2, 482–500. 

\bibitem{KreschTamvakis-lagrangian}
Andrew Kresch and Harry Tamvakis,
Quantum cohomology of the Lagrangian Grassmannian. 
{\it J. Algebraic Geom.} {\bf 12} (2003), no. 4, 777–810.

\bibitem{k-schur-affine-schubert}
Thomas Lam, Luc Lapointe, Jennifer Morse, Anne Schilling, Mark Shimozono, Mike Zabrocki, $k$-Schur Functions and Affine Schubert Calculus. {\it Fields Institute Monographs} {\bf 33}, 2014.

\bibitem{LapointeLascouxMorse}
Luc Lapointe, Alain Lascoux and Jennifer Morse. Tableau atoms and a new Macdonald positivity conjecture. 
{\it Duke Math. J.} {\bf 116} (2003), 103--146, 


\bibitem{LapointeMorse}
Luc Lapointe, Jennifer Morse, A $k$-Tableau Characterization of $k$-Schur Functions. 
{\it Adv. Math.} {\bf 213} (2007), 183--204.

\bibitem{ONeill} Larkin S. O'Neill, The fixed point property for Grassmann's manifold. Ph.D. Thesis, Ohio State Univ., Columbus, Ohio, 1974

\bibitem{REUreport}
The 2020 Polymath Jr. REU {\it q-Binomials and the Grassmannian} group,
Filtering Grassmannian cohomology via $k$-Schur functions, REU report, 2020; available at \url{http://www-users.math.umn.edu/~reiner/REU/REU2020notes/q_Binomials_and_the_Grassmannian.pdf}.



\bibitem{ReinerTudose}
Victor Reiner and Geanina Tudose,
Conjectures on the cohomology of the Grassmannian.
preprint 2003. 
{\tt arXiv:math/0309281}.

\bibitem{ReinerWooYong}
Victor Reiner, Alexander Woo and Alexander Yong,
Presenting the cohomology of a Schubert variety,
{\it Trans. Amer. Math. Soc.} {\bf 363} (2011), 521--543. 
.


\bibitem{Stanley}
Richard P. Stanley. 
Enumerative Combinatorics, Vol 2. 
Cambridge University Press, 2003. 





\end{thebibliography}
\end{document}